\documentclass[a4paper, 10pt]{amsart}

\usepackage{amssymb}
\usepackage{amscd}
\usepackage{color, hyperref}

\newtheorem{theorem}{Theorem}
\newtheorem{lemma}[theorem]{Lemma}

\newtheorem{remark}[theorem]{Remark}
\newtheorem{proposition}[theorem]{Proposition}
\newtheorem{corollary}[theorem]{Corollary}

\newcommand{\AAMP}{\rm AAMP}
\newcommand{\N}{\mathbb N}
\newcommand{\Z}{\mathbb Z}
\newcommand{\red}{{\text{\rm red}}}
\newcommand{\DP}{\negthinspace :\negthinspace}

\DeclareMathOperator{\Pic}{Pic} 
 
 \DeclareMathOperator{\Cl}{Cl}
\DeclareMathOperator{\spec}{spec}

\begin{document}

\title[On Dedekind domains]{On Dedekind domains whose  class groups are direct sums of cyclic groups}

\author[G.W. Chang and A. Geroldinger]{Gyu Whan Chang and Alfred Geroldinger}

\address{Department of Mathematics Education, Incheon National University,
Incheon 22012, Korea} \email{whan@inu.ac.kr}

\address{University of Graz, NAWI Graz, Institute of Mathematics and Scientific Computing, Heinrichstraße 36,
8010 Graz, Austria}
\email{alfred.geroldinger@uni-graz.at}
\urladdr{https://imsc.uni-graz.at/geroldinger}

\subjclass{13A05, 13C20, 13F05, 20M12, 20M25}

\keywords{Dedekind domains, Krull domains, Krull monoids, orders, class groups, sets of lengths}

\begin{abstract}
For a given family $(G_i)_{i \in \N}$  of finitely generated abelian groups, we construct a Dedekind domain $D$ having the following properties.
\begin{enumerate}
\item $\Pic(D) \cong \bigoplus_{i \in \N}G_i$.
\item For each $i \in \N$, there exists a submonoid $S_i \subseteq D^{\bullet}$ with $\Pic (D_{S_i}) \cong G_i$.
\item Each  class of  $\Pic (D)$ and of all $\Pic (D_{S_i})$ contains  infinitely many prime ideals.
\end{enumerate}
Furthermore, we study orders as well as sets of lengths in the Dedekind domain $D$ and in all its localizations $D_{S_i}$.
\end{abstract}

\maketitle

\section{Introduction}
\smallskip

Claborn's Realization Theorem (\cite[Theorem 7]{Cl66}) states that every abelian group is isomorphic to the class group of a Dedekind domain. This result gave rise to a lot of further research (\cite{Ea-He73}, \cite{Gr74b}, \cite{Sk76}, \cite{Mi-St86}, \cite{Ge-HK92a}). One strand of research was to ask for additional properties of the Dedekind domains with given class group. Leedham-Green (\cite{Le72a}) proved that every abelian group is isomorphic to the class group of a Dedekind domain that is the quadratic extension of a principal ideal domain. A further strand of research asked for the realization of class groups either within special classes of Dedekind domains or within more general classes of Dedekind and Krull domains (\cite{Cl09a,Sm17b, Ch21a, Ch22a}). Nevertheless, there is an abundance of open problems. To mention a classic question, it is still unknown whether every finite abelian group is isomorphic to the class group of the ring of integers of a number field (e.g., \cite{Pe99, Co79a,So83a}). A further direction deals with the distribution of prime divisors in the classes.  Let $G$ be an abelian group and let $(m_g)_{g \in G}$ be a family of cardinal numbers. The question is whether or not there is a monoid or domain (within the given class) whose class group is isomorphic to $G$ and the cardinality of prime divisors in class $g \in G$ is equal to $m_g$ for all $g \in G$. This question has been answered for Krull monoids (\cite[Theorem 2.5.4]{Ge-HK06a}) and for Dedekind domains whose class group has a denumerable generating set (\cite{Gi-He-Sm96}). However, the question is still open for general Dedekind domains (\cite[Section 3.7]{Ge-HK06a}).

The starting point for the present paper is a  realization result by Chang for class groups of almost Dedekind domains (\cite[Theorem 3.5]{Ch22a}).
It is well known that the set of isomorphism classes of finitely generated abelian groups is countable,
so a careful reading of the proof of \cite[Theorem 3.5]{Ch22a}
shows that the following theorem  holds true.

\vspace{.12cm}
\noindent
{\bf Theorem A.}
{\it Let $(G_i)_{i \in \N}$ be a  family of finitely generated abelian groups. Then
there is a Bezout overring $R$ of $\mathbb{Z}[X]$ with the following properties.
\begin{enumerate}
\item $R \cap \mathbb{Q}[X]$ is an almost Dedekind domain.
\item $\Pic (R \cap \mathbb{Q}[X]) \cong \bigoplus_{i \in \N}G_i$.
\item For each $i \in \N$, there is a submonoid $S_i \subseteq \mathbb{Z}^{\bullet}$
such that $R_{S_i} \cap \mathbb{Q}[X]$ is a Dedekind domain with $\Pic (R_{S_i} \cap \mathbb{Q}[X]) \cong G_i$.

\item Every ideal of $R \cap \mathbb{Q}[X]$, that is not contained in $X\mathbb{Q}[X] \cap R$, is invertible.
\end{enumerate}}

\smallskip
\noindent
Note that, whenever we consider a family $(G_i)_{i \in \N}$ of abelian groups,
we neither require that the groups $G_i$ are distinct or non-isomorphic nor that they are nontrivial.

Motivated by this result we establish the following realization result for class groups of Dedekind domains
and this is the main result of the present paper (see Theorem \ref{con2}).

\vspace{.12cm}
\noindent
{\bf Theorem B.}
{\it Let $(G_i)_{i \in \N}$ be a  family of finitely generated abelian groups. Then there is a Dedekind domain $D$ with the following properties.
\begin{enumerate}
\item  $\Pic(D) \cong \bigoplus_{i \in \N}G_i$.

\item For each $i \in \N$,  there is a submonoid $S_i \subseteq D^{\bullet}$ such that $\Pic(D_{S_i}) \cong G_i$.

\item Each  class of  $\Pic (D)$ and of all $\Pic (D_{S_i})$ contains infinitely many prime ideals.
\end{enumerate}}

\smallskip
\noindent
The Dedekind domain $D$, occurring in Theorem B,  is not constructed as an overring of $\mathbb Z[X]$ (unlike Theorem A).
Since every bounded abelian group is a direct sum of cyclic groups, all countably generated, bounded abelian groups occur
as class groups of Dedekind domains with the properties of Theorem B.

\vspace{.12cm}

In Section 2, we briefly discuss what we need of the ideal theory of monoids and domains.
In Section 3, we first construct a reduced Krull monoid $M$ with the properties (1) and (2)
of Theorem B (Corollary \ref{coro3}) for $\Cl_v(M)$ and $\Cl_v(M_{S_i})$.
Then we show that, for a field $K$, the monoid algebra $K[M]$ is a Krull domain with the properties (1), (2) and (3) of Theorem B (Proposition \ref{con1}).
Finally, we  construct a Dedekind domain with the properties of Theorem B (Theorem \ref{con2}).

\smallskip
In Section 4, we study orders in the Dedekind domain $D$ and in its localizations and we study sets of lengths in these domains (Corollary \ref{coro-8} and Corollary \ref{coro-9}). Sets of lengths  depend not only on the respective Picard groups but also on the distribution of prime ideals in the classes, as given in (3) of Theorem B (property (3) is established in Lemma \ref{lemma3}, which is based on the recent paper \cite{Fa-Wi22b}).

\section{Background on the  ideal theory of monoids and domains}

We gather some basic concepts of ideal theory of monoids and domains and fix our notation. Detailed presentations can be found in the monographs (\cite{Gi72a, HK98, Ge-HK06a}).

\subsection{Monoids}
By a {\it semigroup}, we mean a commutative semigroup with identity and
by a {\it monoid}, we  mean a  cancellative semigroup.
Let $M$ be a monoid.  Then $M^{\times}$ denotes its group of units and $\mathsf q (M)$ denotes its quotient group. We say that $M$ is reduced if $M^{\times} = \{1\}$ and
$M_{\red} = \{ a M^{\times} \mid a \in M\}$ is the associated reduced monoid of $M$.
Furthermore, $M$ is called torsionless (or torsionfree) if $a^n = b^n$, where $a, b \in M$ and $n \in \N$, implies that $a=b$. Let $S \subseteq M$ be a submonoid. Then $S$ is said to be divisor-closed if $a, b \in M$ and $ab \in S$ implies that $a \in S$ and $b \in S$. Let
\[
\widehat M = \{ x\in \mathsf q (M) \mid \text{there is a $c\in M$ such that $cx^n\in M$ for every $n\in \N$} \}
\]
denote the {\it complete integral closure} of $M$, and we say that $M$ is {\it completely integrally closed} if $M = \widehat M$.
We denote by
\[
M_S = S^{-1}M = \Big\{ \frac{m}{s} \mid m \in M, s \in S \Big\} \subseteq \mathsf q (M)
\]
the localization of $M$ by $S$. If $T$ denotes the smallest divisor-closed submonoid generated by $S$, then $S^{-1}M= T^{-1}M$.

For the convenience of the reader, we give a brief introduction to the terminologies related to ideal system (details can be found in \cite{HK98, Ge-HK06a, HK11b}).  An {\it ideal system} on a monoid $M$ is a map $r\colon\mathcal P(M)\to\mathcal P(M)$, with $\mathcal P (M)$ denoting the power monoid of $M$, such that the following conditions are satisfied for all subsets $X,Y\subseteq M$ and all $c\in M$:
\begin{itemize}
\item $X\subseteq X_r$.

\item $X\subseteq Y_r$ implies $X_r\subseteq Y_r$.

\item $cM\subseteq \{c\}_r$.

\item $cX_r=(cX)_r$.
\end{itemize}
Let $r$ be an ideal system on $M$. A subset $I \subseteq M$ is called an $r$-ideal if $I_r=I$. We denote by $\mathcal I_r (M)$ the set of all nonempty $r$-ideals, and we define $r$-multiplication by setting $I \cdot_r J = (IJ)_r$ for all $I,J \in \mathcal I_r (M)$. Then $\mathcal I_r (M)$ together with $r$-multiplication is a reduced semigroup with identity element $M$. Let $\mathcal F_r (M)$ denote the semigroup of fractional $r$-ideals, $\mathcal F_r (M)^{\times}$ the group of $r$-invertible fractional $r$-ideals, and $\mathcal I_r^*(M) = \mathcal F_r^{\times} (M) \cap \mathcal I_r (M)$ the  monoid of $r$-invertible $r$-ideals of $M$ with $r$-multiplication.

We will need the $v$-system (in other words, the system of divisorial ideals), the $t$-system, and the $s$-system. To recall the definitions, consider two subsets $X, Y \subseteq \mathsf q(M)$. Then  $(X\DP Y)=\{x\in\mathsf q(M)\mid xY \subseteq X\}$, $X_s = XM$,  $X^{-1}=(M \DP X)$, $X_v=(X^{-1})^{-1}$, and  $X_t=\bigcup_{E\subset X,|E|<\infty} E_v$.

We denote by $\mathfrak X (M)$ the set of nonempty minimal prime $s$-ideals and note that $\mathfrak X (M) \subset t$-$\spec (M)$. The cokernel of the group homomorphism $\mathsf q (M) \to \mathcal F_r^{\times} (M)$, defined by $a \mapsto aM$, is called the $r$-class group of $M$ and is denoted by $\Cl_r (M)$. For an $r$-ideal $I \in \mathcal F_r^{\times} (M)$, we denote by $[I]=[I]_r \in \Cl_r (M)$ the class containing $I$. The class group will be written additively, whence $[I \cdot_r J] = [I]+[J]$ for all $I, J \in \mathcal F_r^{\times} (M)$, and the elements of $\Cl_r (M)$ are considered as subsets of $\mathcal F_r^{\times} (M)$.  In particular, $\boldsymbol 0 = [M] \in \Cl_r (M)$ is the zero element of $\Cl_r (M)$.

The monoid $M$ is a {\it Krull monoid} if it satisfies one of the following equivalent conditions (\cite[Chapter 22.8]{HK98}).
\begin{itemize}
\item $M$ is completely integrally closed and $v$-noetherian (i.e., the ascending chain condition on $v$-ideals holds).

\item Every $t$-ideal is $t$-invertible (i.e., $\mathcal I_t (M) = \mathcal I_t^* (M)$).

\item Every $\mathfrak p \in \mathfrak X (M)$ is $t$-invertible.
\end{itemize}
If $M$ is a Krull monoid, then $v=t$ and the monoid $\mathcal I_v^* (M)$ of $v$-invertible $v$-ideals is a free abelian monoid with $v$-multiplication as operation and with basis $\mathfrak X (M)$. The monoid $M$ is Krull if and only if $M_{\red}$ is Krull. If this holds, then
$\Cl_t (M) = \Cl_v (M) = \Cl_v (M_{\red}) = \Cl_t (M_{\red})$. Obviously, reduced Krull monoids are torsionless.

\subsection{Integral domains} By a {\it domain}, we mean a commutative integral domain with identity. Let $D$ be a domain. Then $D^{\times}$ denotes its unit group, $\mathsf q (D)$ the quotient field of $D$, and $D^{\bullet} = D \setminus \{0\}$ denotes its monoid of nonzero elements. If $\mathfrak p \in \spec (D)$, then $D \setminus \mathfrak p \subset D^{\bullet}$ is a divisor-closed submonoid.  For an ideal system $r$ of $D$,  $\mathcal I_r (D)$, $\mathcal I_r^* (D)$, and $\mathcal F_r^{\times} (D) = \mathsf q (\mathcal I_r^* (D))$ denote the respective ideal semigroups as in the monoid case. For the $d$-system (the system of usual ring  ideals), we omit the subscripts, whence $\mathcal I (D)$ denotes the semigroup of nonzero ideals of $D$, $\mathcal I^* (D)$ the monoid of invertible ideals of $D$, and $\mathcal F^{\times} (D) = \mathsf q (\mathcal I^* (D))$ the group of invertible fractional ideals of $D$.
Divisorial ideals and $t$-ideals of $D^{\bullet}$ and of $D$ are in one-to-one correspondence. Indeed, if $r=t$ or $r=v$, then the maps
\[
\iota^\bullet \colon
\begin{cases}
\mathcal F_r(D) &\to \  \mathcal F_r(D^\bullet)\\
\quad \mathfrak a &\mapsto \quad \mathfrak a \setminus \{0\}
\end{cases}
\qquad \text{and} \qquad
\iota^\circ \colon
\begin{cases}
\mathcal F_r(D^\bullet) &\to \  \mathcal F_r(D)\\
\quad \mathfrak a &\mapsto \quad \mathfrak a \cup \{0\}
\end{cases}
\]
are semigroup isomorphisms, inverse to each other,  mapping
$\mathcal I_r(D)$ onto $\mathcal I_r(D^\bullet)$ and fractional
principal ideals of $D$ onto fractional principal ideals of
$D^\bullet$. This easily implies that $\Cl_r (D) = \Cl_r (D^{\bullet})$, and that $D$ is a Krull domain if and only if $D^{\bullet}$ is a Krull monoid. If $r=d$ is the system of usual ring ideals, then
\[
\Pic(D) = \Cl_d (D) \subseteq \Cl_t(D) \subseteq \Cl_v(D)
\]
is  the {\it Picard group}  of $D$, which is a subgroup of the $t$-class group $\Cl_t(D)$. If $D$ is a one-dimensional  domain, then $\Cl_t (D) =  \Pic (D)$. If $D$ is $v$-noetherian, then $v=t$, whence $\Cl_v (D)= \Cl_t (D)$, and if $D$ is a Krull domain, then $\Cl_v (D)$ is (isomorphic to) the  divisor class group of $D$.

The domain $D$ is a Dedekind domain if and only if $D$ is a Krull domain with Krull dimension at most one.
Furthermore, the following are equivalent (see (\cite[Chapter 23]{HK98}, and \cite{Lo06a} for a survey on almost Dedekind domains).
\begin{itemize}
\item[(a)] $D$ is an almost Dedekind domain.

\item[(b)] $\mathcal I (D)$ is cancellative.

\item[(c)] $D_M$ is a DVR for all nonzero maximal ideals $M$ of $D$.
\end{itemize}
Thus, every almost Dedekind domain is completely integrally closed and of Krull dimension at most one.
Hence, a Dedekind domain is exactly a noetherian almost Dedekind domain,
or equivalently, a Krull almost Dedekind domain.
We will also use that  if each maximal ideal of $D$ is a $t$-ideal (e.g., if $D$ is an almost Dedekind domain),
then a $t$-invertible ideal of $D$ is invertible.

\section{On the  construction of the desired Dedekind domain}

First, we construct a Krull domain $D$,
which has the properties (1), (2), and (3) of Theorem B for $\Cl_v(D)$ and $\Cl_v(D_{S_i})$.
This domain  will be constructed as a monoid algebra $K[M]$
of a Krull monoid $M$ over a field $K$, and the monoid $M$ can be chosen
as  the associated reduced monoid of the complement of $X\mathbb{Q}[X] \cap R$ of Theorem A (Corollary \ref{coro3}). We then use $D$ to
prove the existence of a Dedekind domain with the properties of Theorem B (Theorem \ref{con2}).
We begin this section with
some ideal-theoretic properties of the desired monoid.

\smallskip
\begin{lemma} \label{lemma1}
Let $D$ be a domain, $Q$ be a prime ideal of $D$, $M = D \setminus Q \subset D^{\bullet}$,
$I \in \mathcal I (D)$ with $I \cap M \neq \emptyset$,
and $J$ be a fractional $s$-ideal of $M$. Then the following statements hold.
\begin{enumerate}
\item $I = (I \cap M)D$.
\item $(JD)^{-1} = J^{-1}D$.
\item $(JD)_v = J_vD$.
\item $(JD)_t = J_tD$.
\item $JD$ is $t$-invertible if and only if $J$ is $t$-invertible.
\end{enumerate}
\end{lemma}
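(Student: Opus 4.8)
The plan is to prove (1) directly and then bootstrap (2)--(5) from it, the engine being two identities that both rest on $Q$ being prime: first, $\mathsf q(M)\cap D=M$, and second (for an integral $s$-ideal $J\subseteq M$) the ``trace'' identity $(JD)^{-1}\cap \mathsf q(M)=J^{-1}$. The first is quick: if $z=a/b\in D$ with $a,b\in M=D\setminus Q$, then $zb=a\notin Q$, so $z\notin Q$, i.e. $z\in M$; the reverse inclusion is trivial. For (1) itself, $(I\cap M)D\subseteq I$ is clear, and for the reverse I would fix $s\in I\cap M$ and take arbitrary $x\in I$: if $x\notin Q$ then $x\in I\cap M$, while if $x\in Q$ then $x+s\notin Q$ (as $s\notin Q$ and $Q$ is an ideal), so $x+s\in I\cap M$ and $x=(x+s)-s\in (I\cap M)D$. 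Hence $I=(I\cap M)D$.

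For (2), the inclusion $J^{-1}D\subseteq (JD)^{-1}$ is immediate from $J^{-1}J\subseteq M\subseteq D$. For the reverse I would first reduce to the case $J\subseteq M$ integral: choosing $c\in M$ with $cJ\subseteq M$ and replacing $J$ by $cJ$ multiplies both sides of the asserted equality by $c^{-1}$, so it suffices to treat integral $J$. For integral $J$ I would establish the trace identity $(JD)^{-1}\cap\mathsf q(M)=J^{-1}$: the inclusion $\supseteq$ is clear, and if $x\in\mathsf q(M)$ satisfies $xJ\subseteq D$, then $xJ\subseteq \mathsf q(M)\cap D=M$, so $x\in J^{-1}$. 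Now $(JD)^{-1}$ is an integral fractional ideal containing $1$ (because $JD\subseteq D$); picking any $c\in J\subseteq M$, the ideal $c(JD)^{-1}\subseteq D$ contains $c$, hence meets $M$, so by (1) it equals $\bigl(c(JD)^{-1}\cap M\bigr)D=(cJ^{-1})D$, the last step being the trace identity scaled by $c$. Dividing by $c$ gives $(JD)^{-1}=J^{-1}D$.

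Parts (3) and (4) then follow formally. For (3) I would apply (2) to $J$ and to $J^{-1}$, obtaining $(JD)_v=((JD)^{-1})^{-1}=(J^{-1}D)^{-1}=(J^{-1})^{-1}D=J_vD$. For (4) I would use that the $t$-operation is the directed union over finite subsets: for a finite set $E$, (3) yields $(ED)_v=E_vD$; since every finite subset of $JD$ is contained in $ED$ for some finite $E\subseteq J$ (expand each generator as a $D$-combination of elements of $J$) and conversely each $E_vD=(ED)_v\subseteq (JD)_t$, passing to unions gives $(JD)_t=J_tD$.

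Finally, for (5) I would combine (2) and (4): since $(JD)(JD)^{-1}=(JJ^{-1})D$, we get $\bigl((JD)(JD)^{-1}\bigr)_t=(JJ^{-1})_tD$, so $JD$ is $t$-invertible iff $(JJ^{-1})_tD=D$, whereas $J$ is $t$-invertible iff $(JJ^{-1})_t=M$. Writing $A=(JJ^{-1})_t\subseteq M$, it remains to show $AD=D\iff A=M$ for an integral $t$-ideal $A$, the nontrivial direction being $AD=D\Rightarrow A=M$. Here $1\in AD$ gives $1\in ED$ for some finite $E\subseteq A$; then $B:=E_v\subseteq A$ is divisorial with $BD=(ED)_v=D$ by (3), so by (2) $B^{-1}D=(BD)^{-1}=D$, forcing $B^{-1}\subseteq\mathsf q(M)\cap D=M$; since also $M\subseteq B^{-1}$, we obtain $B^{-1}=M$ and $B=B_v=M$, whence $A=M$. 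I expect the reverse inclusion in (2) to be the main obstacle, since it is exactly where primeness of $Q$ (via $\mathsf q(M)\cap D=M$) and the descent of the fractional ideal $(JD)^{-1}$ to its trace on $M$ via (1) are used; the only subtlety in (5) is the finite-character reduction of the $t$-ideal $A$ to the divisorial ideal $B$, which is essential because $AD=D\Rightarrow A=M$ genuinely fails for arbitrary $s$-ideals $A$.
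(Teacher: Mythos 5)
Your proposal is correct and follows essentially the same route as the paper's proof: (1) by the same translation trick with $s+x$, (2) via part (1) applied to an integral multiple of $(JD)^{-1}$ together with the identities $\mathsf q(M)\cap D=M$ and $(JD)^{-1}\cap\mathsf q(M)=J^{-1}$, and then (3)--(5) bootstrapped formally, with (5) reduced to a finitely generated subideal of $(JJ^{-1})_t$ exactly as in the paper. The only differences are cosmetic (you first normalize $J$ to be integral in (2), and in (5) you argue via $B^{-1}=M$ where the paper shows $A_vD\cap M=A_v$), so nothing of substance separates the two proofs.
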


\begin{proof}
(1) Clearly, $(I \cap M)D \subseteq I$. For the reverse containment,
let $x \in I$. We may assume that $x \not\in I \cap M$, so if
$s \in I \cap M$, then $s+x \in I \cap M$ because $x \in Q$. Hence, $x \in (s, x+s)D \subseteq (I \cap M)D$.
Thus, $I \subseteq (I \cap M)D$.

(2) Note that $(J^{-1}D)(JD) \subseteq MD = D$, so $J^{-1}D \subseteq (JD)^{-1}$.
For the reverse containment, choose $s \in J$, so $s(JD)^{-1} \subseteq D$ and $s(JD)^{-1} \cap M \neq \emptyset$.
Then, by (1), $s(JD)^{-1} = (s(JD)^{-1} \cap M)D$, and thus $(JD)^{-1} \subseteq ((JD)^{-1} \cap \mathsf q (M))D$.
Now, if $y \in (JD)^{-1} \cap \mathsf q (M)$, then $yJ \subseteq y(JD) \subseteq D$, whence
$yJ \subseteq D \cap \mathsf q (M) = M$, and so we have  $y \in J^{-1}$. Hence,
$(JD)^{-1} \cap \mathsf q (M) \subseteq J^{-1}$, which implies that $(JD)^{-1} \subseteq J^{-1}D$.

(3) $(JD)_v = ((JD)^{-1})^{-1} = ((J^{-1})D)^{-1} = ((J^{-1})^{-1})D = J_vD$ by (2).

(4)  By (3), we have
\begin{eqnarray*}
(JD)_t &=& \bigcup \ \{B_v \mid B \subseteq JD \text{ and } 0 \ne B \ \text{is a finitely generated fractional ideal of $D$}\} \\
    &=& \bigcup \ \{(CD)_v \mid C \text{ is a finitely generated $s$-ideal of $M$ with} \ C \subseteq J\}\\
    &=& \bigcup \ \{C_vD \mid C \text{ is a finitely generated $s$-ideal of $M$ with} \ C \subseteq J\} \,, \\
    &=& J_tD,
\end{eqnarray*}
whence $(JD)_t = J_tD$.

(5) By (2) and (4), $((JD)(JD)^{-1})_t = (JJ^{-1})_tD$.
Hence, if $(JJ^{-1})_t = M$, then $((JD)(JD)^{-1})_t = D$.
Conversely, assume that $((JD)(JD)^{-1})_t = D$. Then there is a finitely generated subideal
$A$ of $(JJ^{-1})_t$ such that $(AD)_v = A_vD = D$. Now, let $x \in A_vD \cap M$.
Then, by (2), $$xA^{-1} \subseteq xA^{-1}D \cap M = x(AD)^{-1} \cap M \subseteq D \cap M = M,$$
so $x \in A_v$. Thus, $A_vD \cap M = A_v$, which implies that $(JJ^{-1})_t = M$.
\end{proof}

\begin{remark}
{\em Lemma \ref{lemma1}(1) need not be true if $I$ is not an integral ideal of $D$.
For example, if $a \in Q \setminus \{0\}$ and $s \in M$,
then $I = \frac{s}{a}D$ is a fractional ideal of $D$, $I \cap \mathsf q (M) \neq \emptyset$,
but $I \neq (I \cap \mathsf q (M))D$. }
\end{remark}


The next result is an almost Dedekind domain analog of the fact that if $D$ is a Dedekind domain, then
$D^{\bullet}$ is a Krull monoid with $\Pic (D) \cong \Cl_v(D^{\bullet})$.

\begin{proposition} \label{lemma2}
Let $D$ be an almost Dedekind domain, $Q$ be a nonzero prime ideal of $D$,
and $M = D \setminus Q \subseteq D^{\bullet}$.
Assume that each ideal of $D$, that is not contained in $Q$, is invertible.
Then $M$ has the following properties.
\begin{enumerate}
\item $M$ is a Krull monoid.
\item $\Pic (D) \cong \Cl_v (M)$.
\item If $S \subseteq M$ is a submonoid, then $M_S$ is a Krull monoid and $\Pic (D_S) \cong \Cl_v (M_S)$.
\end{enumerate}
\end{proposition}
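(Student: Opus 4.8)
The plan is to derive all three parts from the ideal correspondence of Lemma \ref{lemma1}, combined with the two facts recorded in Section 2 that, for an almost Dedekind domain, a $t$-invertible ideal is invertible, and that $M$ is Krull as soon as $\mathcal I_t (M) = \mathcal I_t^* (M)$. For part (1), I would take a nonempty integral $t$-ideal $J$ of $M$. Since $J \subseteq M$, the ideal $JD$ of $D$ satisfies $JD \cap M \supseteq J \neq \emptyset$, hence $JD \not\subseteq Q$; by hypothesis $JD$ is invertible, in particular $t$-invertible, and Lemma \ref{lemma1}(5) then forces $J$ to be $t$-invertible. As this covers every $t$-ideal of $M$, the monoid $M$ is Krull.

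For part (2) I would introduce $\varphi \colon \mathcal F_v^{\times}(M) \to \mathcal F^{\times}(D)$, $J \mapsto JD$. It is well defined because Lemma \ref{lemma1}(5) carries a $v$-invertible (equivalently $t$-invertible, as $M$ is Krull) ideal $J$ to a $t$-invertible ideal $JD$, which is invertible since $D$ is almost Dedekind. It is a homomorphism: using Lemma \ref{lemma1}(3) one gets $(J \cdot_v J')D = (JJ')_v D = (JJ'D)_v$, and since $(JD)(J'D) = JJ'D$ is invertible, hence divisorial, this equals $(JD)(J'D)$. As $\varphi(aM) = aD$ for $a \in \mathsf q(M) \subseteq \mathsf q(D)$, the map descends to $\bar\varphi \colon \Cl_v(M) \to \Pic(D)$. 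For injectivity I would represent a given class by an integral $v$-ideal $J \subseteq M$ (possible since $M$ is Krull) with $JD = xD$ principal; then $xD = JD$ is integral and not contained in $Q$, so $x \in D \setminus Q = M$, and Lemma \ref{lemma1}(1) together with the elementary identity $xD \cap M = xM$ gives $J = xM$, i.e.\ the class is trivial.

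The crux, and the step I expect to be the main obstacle, is the surjectivity of $\bar\varphi$: every class of $\Pic(D)$ must contain an invertible ideal that is not contained in $Q$. Given an invertible fractional ideal $\mathfrak a$, I would exploit that $D_Q$ is a DVR, say with $\mathfrak a D_Q = \pi^n D_Q$, and choose $y$ among the generators of the finitely generated ideal $\mathfrak a^{-1}$ with $v_Q(y) = -n$. Then $\mathfrak b := y \mathfrak a \subseteq \mathfrak a^{-1}\mathfrak a = D$ is integral, the relation $\mathfrak b D_Q = D_Q$ forces $\mathfrak b \not\subseteq Q$, and $[\mathfrak b] = [\mathfrak a]$ in $\Pic(D)$ because $\mathfrak b = (yD)\mathfrak a$. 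Writing $\mathfrak b = (\mathfrak b \cap M)D$ via Lemma \ref{lemma1}(1) then exhibits $[\mathfrak a]$ as $\bar\varphi[\mathfrak b \cap M]$, so $\bar\varphi$ is an isomorphism.

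Finally, for part (3) I would reduce to parts (1) and (2) applied to a localization. Setting $D' = D_S$ and $Q' = QD_S$, the inclusion $S \subseteq M = D \setminus Q$ yields $Q \cap S = \emptyset$, so $Q'$ is a nonzero prime of the almost Dedekind domain $D'$, and a direct computation shows $D' \setminus Q' = M_S$. Moreover, any ideal of $D'$ not contained in $Q'$ is the extension of an ideal of $D$ not contained in $Q$, hence invertible by hypothesis and by the stability of invertibility under localization. Thus the triple $(D', Q', M_S)$ again satisfies the assumptions of the proposition, and applying (1) and (2) to it gives that $M_S$ is a Krull monoid with $\Pic(D_S) \cong \Cl_v(M_S)$.
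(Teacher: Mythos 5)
Your proposal follows the paper's strategy quite closely: part (1) is the paper's argument essentially verbatim, part (3) is the same reduction to the triple $(D_S, QD_S, M_S)$, and part (2) is built on the same extension map $J \mapsto JD$, with Lemma \ref{lemma1} doing the heavy lifting. The one genuinely different ingredient is your surjectivity argument. The paper argues without any valuation theory: since $AA^{-1} = D \nsubseteq Q$ and $Q$ is an ideal, some single product $xa$ with $x \in A^{-1}$, $a \in A$ lies outside $Q$, so $xA \subseteq D$ and $xA \nsubseteq Q$, and one concludes with Lemma \ref{lemma1}(1) and (5) exactly as you do. You instead localize at $Q$, use that $D_Q$ is a DVR (the almost Dedekind hypothesis), and normalize valuations so that $\mathfrak b = y\mathfrak a$ is integral with $\mathfrak b D_Q = D_Q$. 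Both are correct; the paper's variant is more economical (it needs only that $Q$ is an ideal), while yours makes the intuition of ``moving the class to an ideal invisible at $Q$'' explicit. Likewise, mapping into $\mathcal F^{\times}(D)$ via ``$t$-invertible implies invertible in an almost Dedekind domain'' instead of into $\mathcal F_t^{\times}(D)$ via $\Pic(D) = \Cl_t(D)$ for one-dimensional domains is an equivalent cosmetic variation.

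There is, however, one step whose stated justification does not suffice: in the injectivity argument you claim that Lemma \ref{lemma1}(1) together with the identity $xD \cap M = xM$ gives $J = xM$. Lemma \ref{lemma1}(1) applied to $I = xD$ returns only the tautology $xD = (xD \cap M)D$; combined with $JD = xD$ it says that $J$ and $xM$ generate the same ideal of $D$, which is not yet $J = xM$ — distinct $s$-ideals of $M$ can have the same extension to $D$, and ruling this out for divisorial ideals is precisely the point at issue. What you need is the identity $J = JD \cap M$ for an integral $v$-ideal $J$ of $M$, and this follows from Lemma \ref{lemma1}(2), not (1): if $y \in JD \cap \mathsf q (M)$, then $y(JD)^{-1} \subseteq D$, hence $yJ^{-1} \subseteq yJ^{-1}D \cap \mathsf q (M) = y(JD)^{-1} \cap \mathsf q (M) \subseteq D \cap \mathsf q (M) = M$, so that $y \in (M \DP J^{-1}) = J_v = J$. (This is exactly the computation the paper's own proof invokes when it cites ``the proof of Lemma \ref{lemma1}(5)''.) With this substitution — then $J = JD \cap M = xD \cap M = xM$ — your injectivity argument, and with it the whole proof, is complete.
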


\begin{proof}
(1) We need to show that every $t$-ideal of $M$ is $t$-invertible. Let $J$ be a $t$-ideal of $M$.
Then $JD \nsubseteq Q$, and hence $JD$ is invertible.
Since an invertible ideal is $t$-invertible, $J$ is $t$-invertible by Lemma \ref{lemma1}(5).
Thus, $M$ is a Krull monoid.

(2)  Let $\varphi : \mathcal F_t^{\times} (M) \to \mathcal F_t^{\times} (D)$ be a map defined by $\varphi (J) = JD$.
Then, by  Lemma \ref{lemma1}(4) and (5), $\varphi$ is well-defined.
Moreover, if $I, J$ are two $t$-invertible fractional $t$-ideals of $M$, then
\[
  \begin{aligned}
    \varphi (I  \cdot_t J) \,\, & = \,\, \varphi \big( (IJ)_t \big) \,\, = \,\, (IJ)_tD
                                                  = \,\, \big( (ID)(JD) \big)_t \,\,\\
                                                  & = \,\, ID \cdot_t JD
                                                 = \,\, \varphi (I) \cdot_t \varphi (J) \,,
  \end{aligned}
\]
whence $\varphi$ is a group homomorphism.
Since principal ideals are mapped onto principal ideals, $\varphi$ induces a group homomorphism
$\widetilde{\varphi} : \Cl_t (M) \to \Cl_t (D)$,
given by $\widetilde{\varphi} \big( [J]  \big) = [JD]$.

Note that  $\Cl_v(M) = \Cl_t(M)$ by (1) and $\Pic(D) = \Cl_t(D)$ because $D$ is one-dimensional.
Hence, it suffices to show that $\widetilde{\varphi}$ is bijective.
For the injectivity of $\widetilde{\varphi}$,
let $I, J$ be two $t$-invertible fractional $t$-ideals of $M$ such that
$\widetilde{\varphi} ( [I] ) = \widetilde{\varphi} ( [J] )$.
Then $ID = u(JD)$ for some $u \in \mathsf q (D)$. Without loss
of generality, we may assume that $u \in \mathsf q (M)$, $I \subseteq M$, and $uJ \subseteq M$.
Thus, by the proof of Lemma \ref{lemma1}(5),
\[
  I \, = \, ID \cap M \, = \, uJD \cap M \, = \, uJ \,.
\]
whence $[I] = [J]$. Thus, $\widetilde{\varphi}$ is injective.
To verify that  $\widetilde{\varphi}$ is surjective, let
$A$ be an invertible ideal of $D$. Then $AA^{-1} \nsubseteq Q$, and hence
there is an $x \in A^{-1}$ such that $xA \nsubseteq Q$ and $xA \subseteq D$.
Note that $[A] = [xA]$, $xA = (xA \cap M)A$, and $xA \cap M$ is $t$-invertible
by Lemma \ref{lemma1}, whence $\widetilde{\varphi} ( [xA \cap M]) = [A]$.
Thus, $\widetilde{\varphi}$ is surjective.
(This proof is similar to the proof of \cite[Theorem 3.6]{Ch-Oh22a}.)

(3) Note that $M_S = D_S \setminus Q_S$, $D_S$ is an almost Dedekind domain,
and each ideal of $D_S$, that is not contained in $Q_S$, is invertible.
Thus, $M_S$ is a Krull monoid and $\Pic (D_S) \cong \Cl_v (M_S)$ by (1) and (2).
\end{proof}

We now use the almost Dedekind domain of Theorem A to
construct a reduced Krull monoid with some preassigned properties on the class group.

\begin{corollary} \label{coro3}
Let $(G_i)_{i \in \N}$ be a  family of finitely generated abelian groups.
Then there is a reduced Krull monoid $M$ with the following properties.
\begin{enumerate}
\item $\Cl_v (M) \cong \bigoplus_{i \in \N}G_i$.
\item For each $i \in \N$, there is a submonoid $S_i \subseteq M$ such that $\Cl_v (M_{S_i}) \cong G_i$.
\end{enumerate}
\end{corollary}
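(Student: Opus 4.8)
The plan is to feed the almost Dedekind domain furnished by Theorem A into Proposition \ref{lemma2} and then pass to the associated reduced monoid. Concretely, I would take the Bezout overring $R$ of $\mathbb{Z}[X]$ supplied by Theorem A and set $D = R \cap \mathbb{Q}[X]$, which is an almost Dedekind domain by Theorem A(1) and satisfies $\Pic(D) \cong \bigoplus_{i\in\N} G_i$ by Theorem A(2). Next I would let $Q = X\mathbb{Q}[X]\cap R$; since $X \in \mathbb{Z}[X] \subseteq R$ and $X \in X\mathbb{Q}[X]$, we have $X \in Q$, so $Q$ is a nonzero prime ideal of $D$ (it is the contraction to $D$ of the prime $X\mathbb{Q}[X]$), and Theorem A(4) says precisely that every ideal of $D$ not contained in $Q$ is invertible. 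Thus all hypotheses of Proposition \ref{lemma2} hold for the pair $(D,Q)$.

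Setting $M_0 = D \setminus Q$, Proposition \ref{lemma2}(1),(2) gives that $M_0$ is a Krull monoid with $\Cl_v(M_0) \cong \Pic(D) \cong \bigoplus_{i\in\N} G_i$. To obtain a \emph{reduced} monoid I would put $M = (M_0)_{\red}$. Since a monoid is Krull exactly when its associated reduced monoid is Krull, and since $\Cl_v(M) = \Cl_v(M_{\red})$ for Krull monoids (as recorded in Section 2), $M$ is a reduced Krull monoid with $\Cl_v(M) \cong \bigoplus_{i\in\N} G_i$. This is property (1).

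For property (2), I would first observe that each submonoid $S_i \subseteq \mathbb{Z}^{\bullet}$ from Theorem A(3) lies inside $M_0$: a nonzero integer lies in $D$ and, having nonzero constant term, is not in $Q$. Hence Proposition \ref{lemma2}(3) applies with $S = S_i$ and yields that $(M_0)_{S_i}$ is Krull with $\Cl_v((M_0)_{S_i}) \cong \Pic(D_{S_i})$, where $D_{S_i} = (R\cap\mathbb{Q}[X])_{S_i}$. The crux is to identify $D_{S_i}$ with the domain $R_{S_i}\cap\mathbb{Q}[X]$ of Theorem A(3): because localization is exact it commutes with the finite intersection taken inside $\mathsf q(D) = \mathbb{Q}(X)$, and every element of $S_i \subseteq \mathbb{Z}^{\bullet}$ is already a unit of $\mathbb{Q}[X]$, so $D_{S_i} = R_{S_i} \cap (\mathbb{Q}[X])_{S_i} = R_{S_i}\cap\mathbb{Q}[X]$. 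Theorem A(3) then gives $\Pic(D_{S_i}) \cong G_i$, whence $\Cl_v((M_0)_{S_i}) \cong G_i$. Finally, taking $S_i'$ to be the image of $S_i$ under the projection $M_0 \to M = (M_0)_{\red}$, the compatibility of localization with passage to the reduced monoid gives $M_{S_i'} \cong \big((M_0)_{S_i}\big)_{\red}$; since $\Cl_v$ is unchanged under reduction, $\Cl_v(M_{S_i'}) = \Cl_v((M_0)_{S_i}) \cong G_i$, establishing (2).

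I expect the main obstacle to be the bookkeeping in this last step, namely matching the localization $D_{S_i}$ of the intersection with the intersection $R_{S_i}\cap\mathbb{Q}[X]$ of localizations, and checking that localizing $M_0$ and then reducing agrees (with the correct $v$-class group) with localizing the reduced monoid $M$ at the image of $S_i$. Both are standard flatness and reduction–localization compatibility facts, but they must be phrased with care so that the chain of isomorphisms $\Cl_v(M_{S_i'}) = \Cl_v((M_0)_{S_i}) \cong \Pic(D_{S_i}) \cong G_i$ transports correctly.
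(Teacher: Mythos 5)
Your overall route is exactly the paper's: feed the almost Dedekind domain of Theorem A into Proposition \ref{lemma2} to obtain the Krull monoid $M_0 = D \setminus Q$ with $\Cl_v (M_0) \cong \Pic (D) \cong \bigoplus_{i \in \N} G_i$ and $\Cl_v \big( (M_0)_{S_i} \big) \cong \Pic (D_{S_i}) \cong G_i$, and then pass to the associated reduced monoid $M = (M_0)_{\red}$. Your extra care in checking that $S_i \subseteq \mathbb{Z}^{\bullet}$ lies in $M_0$ and that $D_{S_i} = R_{S_i} \cap \mathbb{Q}[X]$ is welcome; the paper silently folds these identifications into its restatement of Theorem A.

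However, your last step asserts something false. The claimed isomorphism $M_{S_i'} \cong \big( (M_0)_{S_i} \big)_{\red}$ cannot hold whenever $S_i'$ contains a non-identity element: the right-hand side is reduced by construction, whereas $M_{S_i'}$ is not reduced --- its unit group is $\mathsf q (T')$, where $T'$ is the divisor-closed submonoid of $M$ generated by $S_i'$, which (in the paper's notation, with $S$ the divisor-closed hull of $S_i$ in $M_0$) is the generally nontrivial group $\mathsf q (S)/M_0^{\times}$. Localizing a reduced monoid creates units, so no compatibility principle can yield that isomorphism. What is true, and what the second half of the paper's proof establishes by explicit computation of unit groups and quotient groups, is that the two monoids have the same associated reduced monoid: $\big( M_{S_i'} \big)_{\red} \cong \big( (M_0)_{S_i} \big)_{\red}$, both being $\{ a \, \mathsf q (S) \mid a \in M_0 \} \subseteq \mathsf q (M_0)/\mathsf q (S)$. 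With this correction your chain of isomorphisms closes, since $\Cl_v$ is invariant under reduction: $\Cl_v (M_{S_i'}) = \Cl_v \big( (M_{S_i'})_{\red} \big) = \Cl_v \big( ((M_0)_{S_i})_{\red} \big) = \Cl_v \big( (M_0)_{S_i} \big) \cong G_i$. So the gap is local and fixable, but the step you dismissed as a standard compatibility fact is precisely the one the paper works out in detail, and as written your justification for it is incorrect.
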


\begin{proof}
By Theorem A, there is an almost Dedekind domain $T$ with the following four properties:
(1) $\mathbb{Z}[X] \subseteq T \subseteq \mathbb{Q}[X]$,
(2) $T$ has a maximal ideal $Q$ so that every ideal of $T$ that is not contained in $Q$ is invertible,
(3) $\Pic (T) \cong \bigoplus_{i \in \N}G_i$, and
(4) for each $i \in \N$, there is a submonoid $S_i \subseteq T \setminus Q$ with $\Pic (T_{S_i}) \cong G_i$.

\vspace{.1cm}
Proposition \ref{lemma2} implies that the monoid  $M =  T \setminus Q \subseteq T^{\bullet}$
is a Krull monoid with $\Pic (T) \cong \Cl_v (M)$, and with $\Pic (T_S) \cong \Cl_v (M_S)$
for any submonoid  $S \subseteq M$.
We assert that the reduced monoid $M_{\red}$ has the desired properties.
Since $\Cl_v (M_{\red}) = \Cl_v (M) \cong \bigoplus_{i \in \N}G_i$, it remains to verify property (2).

\vspace{.1cm}
Let $S \subseteq M$ be a submonoid and without restriction we may suppose that $S$ is divisor-closed. The following formulas are well-known and easy to check:
\[
\begin{aligned}
S^{\times} & = M^{\times}, (S^{-1}M)^{\times}= \mathsf q (S), \mathsf q (S^{-1}M) = \mathsf q (M) , \\
\mathsf q (M_{\red}) & = \mathsf q (M)/M^{\times}, \mathsf q \big( (S^{-1}M)_{\red} \big) = \mathsf q (M)/\mathsf q (S), \quad \text{and} \\
(S^{-1}M)_{\red} & = \big\{ \frac{a}{s} \mathsf q (S) \mid a \in M, s \in S \big\} = \{ a \mathsf q (S) \mid a \in M \} \subseteq \mathsf q (M)/\mathsf q (S) \,.
\end{aligned}
\]
Applying these formulas to the monoids $S_{\red}$ and $M_{\red}$ we obtain that
\[
\begin{aligned}
(S_{\red}^{-1}M_{\red})^{\times} & = \mathsf q (S_{\red}) = \mathsf q (S)/M^{\times}, \\
\mathsf q \big( (S_{\red}^{-1}M_{\red})_{\red} \big) & = \mathsf q (M_{\red})/\mathsf q (S_{\red}) \cong \mathsf q (M)/\mathsf q (S), \quad \text{and} \\
\Big( S_{\red}^{-1}M_{\red}\Big)_{\red} & = \Big\{ \frac{aM^{\times}}{sM^{\times}} \mathsf q (S_{\red}) \mid a \in M, s \in S \Big\} \cong \{ a \mathsf q (S) \mid a \in M\} \subseteq \mathsf q (M)/\mathsf q (S) \,.
\end{aligned}
\]
Thus, the class groups of $S^{-1}M$, of $(S^{-1}M)_{\red}$, and of $\Big( S_{\red}^{-1}M_{\red}\Big)_{\red}$ coincide, whence (2) follows.
\end{proof}

Let $D$ be a domain, $\Gamma$ be an
additive monoid, and let $D[\Gamma]$ be the monoid algebra of
$\Gamma$ over $D$. Then $D[\Gamma]$ is a free $D$-module and we denote its $D$-basis as $\{T^{\gamma} \mid \gamma \in \Gamma \}$. Thus, every element $f \in D[\Gamma]$ can be written uniquely  in the form
\[
f = \sum_{\gamma \in \Gamma} c_{\gamma} T^{\gamma},
\]
where $c_{\gamma} \in D$ for all $\gamma \in \Gamma$  and $c_{\gamma} \neq 0$ for only finitely many $\gamma \in \Gamma$.
Furthermore,  $D[\Gamma]$ is a commutative ring with identity \cite[page 64]{Gi84},
and $D[\Gamma]$ is an integral domain if and only if $\Gamma$ is torsionless \cite[Theorem 8.1]{Gi84}.

\begin{lemma} \label{lemma3}
Let $K$ be a field, $\Gamma$ be a reduced monoid, and $N \subseteq \Gamma$ be a submonoid.
\begin{enumerate}
\item $K[\Gamma]$ is a Krull domain if and only if $\Gamma$ is a Krull monoid.
\item If $\Gamma$ is a Krull monoid, then  $K[\Gamma_N]$ is a Krull domain, $\Cl_v (K[\Gamma_N]) \cong  \Cl_v (\Gamma_N)$,
and each  class of $\Cl_v (K[\Gamma_N])$ contains infinitely many height-one prime ideals.
\end{enumerate}
\end{lemma}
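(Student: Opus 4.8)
The plan is to reduce the lemma to three inputs: Gilmer's theorem characterizing when a monoid algebra is a Krull domain, Chouinard's computation of its divisor class group, and the recent result \cite{Fa-Wi22b} governing the distribution of prime divisors. The single observation that makes everything run is that the quotient group of a reduced Krull monoid is free abelian. Indeed, since $\Gamma$ is a reduced Krull monoid, $\mathcal I_v^*(\Gamma)$ is a free abelian monoid with basis $\mathfrak X(\Gamma)$, and the principal-ideal embedding $\Gamma \hookrightarrow \mathcal I_v^*(\Gamma)$ identifies $\mathsf q(\Gamma)$ with a subgroup of the free abelian group $\mathsf q(\mathcal I_v^*(\Gamma)) = \Z^{(\mathfrak X(\Gamma))}$. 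As every subgroup of a free abelian group is free abelian, $\mathsf q(\Gamma)$ is free abelian, and in particular it satisfies the ascending chain condition on cyclic subgroups.

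For part (1) I would invoke Gilmer's theorem (\cite{Gi84}): for a field $K$ and a torsionless monoid, the monoid algebra is a Krull domain if and only if the monoid is a Krull monoid and its quotient group satisfies the ascending chain condition on cyclic subgroups. In either direction $\Gamma$ is torsionless (reduced Krull monoids are torsionless, and $K[\Gamma]$ can be a domain only if $\Gamma$ is torsionless), so the theorem applies. The direction ``$K[\Gamma]$ Krull $\Rightarrow$ $\Gamma$ Krull'' is then immediate. For the converse, once $\Gamma$ is Krull the observation above supplies the chain condition for free, so both hypotheses of Gilmer's theorem hold and $K[\Gamma]$ is Krull; this yields the stated equivalence.

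For part (2) I would first use that a localization of a Krull monoid is again a Krull monoid (\cite{Ge-HK06a}), so $\Gamma_N = N^{-1}\Gamma$ is Krull; moreover $\mathsf q(\Gamma_N) = \mathsf q(\Gamma)$ is still free abelian, hence has the chain condition on cyclic subgroups. Applying Gilmer's theorem now to the (possibly non-reduced, but still torsionless) monoid $\Gamma_N$ shows that $K[\Gamma_N]$ is a Krull domain. For the class group, Chouinard's formula $\Cl(K[\Gamma_N]) \cong \Cl(K) \oplus \Cl(\Gamma_N)$ (see \cite{Gi84}), together with the triviality of $\Cl(K)$ and the identities $\Cl = \Cl_v$ valid for Krull domains and Krull monoids, gives $\Cl_v(K[\Gamma_N]) \cong \Cl_v(\Gamma_N)$.

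The genuinely hard part, which I expect to be the main obstacle, is the last assertion, that each class of $\Cl_v(K[\Gamma_N])$ contains infinitely many height-one prime ideals. This does not follow from the structure theory alone: one sees on small examples (e.g.\ the quadric cone $K[u,v,w]/(uw-v^2)$ with class group $\Z/2\Z$, whose nontrivial class contains only two ``monomial'' height-one primes) that the monomial primes by themselves cannot account for the required infinitude, which must instead be produced from the interplay of $K$, the Laurent part $K[\mathsf q(\Gamma_N)]$, and the monoid primes. Here I would appeal to \cite{Fa-Wi22b}, which guarantees precisely this infinitude for Krull monoid algebras. The delicate point in invoking it is that $\Gamma_N$ need not be reduced; its unit group $\mathsf q(N)$ is, however, free abelian (a subgroup of $\mathsf q(\Gamma)$), so one can split off a Laurent-polynomial factor $K[\mathsf q(N)]$ and transfer the prime count to the reduced Krull monoid $(\Gamma_N)_{\red}$, where \cite{Fa-Wi22b} applies directly. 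Checking that this reduction preserves both the classes and the infinitude of prime divisors is the step requiring the most care.
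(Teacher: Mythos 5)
Your proposal is correct and rests on the same three external inputs as the paper's proof: Gilmer's characterization of Krull monoid algebras (\cite[Theorem 15.6]{Gi84}) for (1), the class group formula for monoid algebras (\cite[Corollary 16.8]{Gi84}, Chouinard's theorem) for the isomorphism in (2), and the theorem of \cite{Fa-Wi22b} for the prime divisors. Where you differ is the glue in part (2). The paper notes that $K[\Gamma_N]=K[\Gamma]_S$ for the multiplicative set of monomials $S=\{T^{\gamma}\mid\gamma\in N\}$, so Krull-ness comes for free from stability of Krull domains under localization (\cite[Corollary 43.6]{Gi72a}); you instead apply Gilmer's criterion a second time to the non-reduced monoid $\Gamma_N$, which is legitimate because $\mathsf q(\Gamma_N)=\mathsf q(\Gamma)$ is free abelian. (Your observation that reduced Krull monoids have free abelian quotient groups, hence ACC on cyclic subgroups, is precisely the implicit content behind the paper's ``follows directly''.) For the prime-divisor statement the paper quotes \cite{Fa-Wi22b} for $K[\Gamma_N]$ with no further comment, while you insert a reduction to the reduced case via the splitting $\Gamma_N\cong(\Gamma_N)^{\times}\times(\Gamma_N)_{\red}$. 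That splitting is valid for every Krull monoid (the quotient group of the reduced part is free abelian, hence projective, so the exact sequence of quotient groups splits), and the step you flag as delicate is in fact immediate: the splitting yields a ring isomorphism $K[\Gamma_N]\cong L[(\Gamma_N)_{\red}]$ with $L=K[(\Gamma_N)^{\times}]$ a Laurent polynomial ring, hence a Krull domain, and a ring isomorphism transports divisor classes and height-one primes verbatim, so \cite{Fa-Wi22b} applies at once. Thus your route buys a self-contained verification of the hypotheses of the cited theorems at the cost of a little extra work, while the paper's localization identity buys brevity; the one caveat both arguments share is the degenerate case in which $\Gamma_N$ is a group, where \cite{Fa-Wi22b} is not applicable but $K[\Gamma_N]$ is a factorial Laurent ring, so the assertion holds trivially unless $\Gamma_N$ is the trivial group.
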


\begin{proof}
(1) This follows  directly from \cite[Theorem 15.6]{Gi84}.

(2) $K[\Gamma]$ is a Krull domain by (1). Now, let $S = \{T^{\gamma} \mid \gamma \in N\}$,
then $S$ is a submonoid of $K[\Gamma]$ and $K[\Gamma]_S = K[\Gamma_N]$.
Thus, the results follow from \cite[Corollary 43.6]{Gi72a},  \cite[Corollary 16.8]{Gi84},
and \cite[Theorem]{Fa-Wi22b}, respectively.
\end{proof}

We now present our first construction of a Krull domain $D$ whose
class group is a direct sum of a given countable family of cyclic groups,
each of which is also the class group of a localization of $D$.

\smallskip
\begin{proposition} \label{con1}
Let $(G_i)_{i \in \N}$ be a  family of finitely generated abelian groups.
Then there is a Krull domain $D$ with the following properties.
\begin{enumerate}
\item $\Cl_v (D) \cong \bigoplus_{i \in \N}G_i$.

\item For each $i \in \N$, there exists a submonoid $S_i \subseteq D^{\bullet}$ such that $\Cl_v (D_{S_i}) \cong G_i$.

\item Each  class of  $\Cl_v (D)$ and of all $\Cl_v (D_{S_i})$ contains infinitely many height-one prime ideals.
\end{enumerate}
Moreover, $D$ can be chosen in such a way that $D/P$ is infinite for all height-one prime ideals $P$ of $D$.
\end{proposition}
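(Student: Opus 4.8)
The plan is to realize $D$ as a monoid algebra built from the reduced Krull monoid furnished by Corollary \ref{coro3}. First I would apply Corollary \ref{coro3} to obtain a reduced Krull monoid $M$, together with submonoids $S_i \subseteq M$, satisfying $\Cl_v(M) \cong \bigoplus_{i \in \N} G_i$ and $\Cl_v(M_{S_i}) \cong G_i$ for every $i \in \N$. I would then fix an \emph{infinite} field $K$ (for instance $K = \mathbb Q$) and set $D := K[M]$. Because $M$ is a reduced Krull monoid, Lemma \ref{lemma3}(1) shows at once that $D$ is a Krull domain, so $D$ is an object of the required type.

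The three class-group properties then follow from Lemma \ref{lemma3}(2). Applying that lemma to $\Gamma = M$ with the trivial submonoid $N$ (so that $\Gamma_N = \Gamma$ and $K[\Gamma_N] = D$) yields $\Cl_v(D) \cong \Cl_v(M) \cong \bigoplus_{i \in \N} G_i$, giving (1), as well as the assertion that each class of $\Cl_v(D)$ contains infinitely many height-one prime ideals, which is half of (3). For (2) I would, for each $i \in \N$, introduce the multiplicative submonoid $\mathcal S_i := \{T^{\gamma} \mid \gamma \in S_i\} \subseteq D^{\bullet}$; this is closed under multiplication because $S_i$ is closed under addition and $T^{\gamma}T^{\gamma'} = T^{\gamma+\gamma'}$. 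As in the proof of Lemma \ref{lemma3}(2) one has the identification $D_{\mathcal S_i} = K[M]_{\mathcal S_i} = K[M_{S_i}]$, so a further application of Lemma \ref{lemma3}(2) (this time with $N = S_i$) gives $\Cl_v(D_{\mathcal S_i}) \cong \Cl_v(M_{S_i}) \cong G_i$, which is (2), together with the remaining part of (3) for these localizations. Thus the monoids $\mathcal S_i$ serve as the submonoids $S_i \subseteq D^{\bullet}$ demanded by the statement.

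Finally I would settle the ``Moreover'' clause using the infiniteness of $K$. Since $K$ is a field contained in $D = K[M]$, every nonzero element of $K$ is a unit of $D$, and therefore $P \cap K = \{0\}$ for any proper ideal $P$ of $D$; in particular this holds for every height-one prime ideal $P$. Consequently the composite $K \hookrightarrow D \twoheadrightarrow D/P$ is injective, so $D/P$ contains an isomorphic copy of the infinite field $K$ and is itself infinite. I do not anticipate a genuine obstacle here: the essential content resides in Corollary \ref{coro3} and Lemma \ref{lemma3}, and the only points requiring attention are the identification $D_{\mathcal S_i} = K[M_{S_i}]$ that lets Lemma \ref{lemma3}(2) be reapplied, and the choice of $K$ infinite in order to force infinite residue fields.
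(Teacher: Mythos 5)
Your proposal is correct, and for parts (1)--(3) it is essentially identical to the paper's proof: the paper also takes $D = K[M]$ for the reduced Krull monoid $M$ of Corollary \ref{coro3}, gets the Krull property and the class-group isomorphisms from Lemma \ref{lemma3}, and realizes the localizations via the submonoids $\{T^{\gamma} \mid \gamma \in N_i\}$ with $D_{S_i} = K[M_{N_i}]$, exactly as you do. The only genuine divergence is in the ``Moreover'' clause. There the paper appeals to the structure of height-one primes of $K[M]$: such a prime $P$ is either of the form $fK[\mathsf q(M)] \cap K[M]$ for a prime element $f$ of the factorial domain $K[\mathsf q(M)]$, in which case the inclusion $K \hookrightarrow K[M]/P$ gives infiniteness, or of the form $P = K[Q]$ for a height-one prime $Q$ of $M$, in which case $K[M \setminus Q]$ is a set of representatives of $K[M]/P$. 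Your argument is simpler and equally valid: since $K$ is a field and $P$ is a proper ideal, $K \cap P$ is a proper ideal of $K$ and hence zero, so $K$ embeds into $D/P$ for \emph{every} proper ideal $P$, with no case distinction and no appeal to the classification of height-one primes in monoid algebras. What the paper's second case shows (and you do not need) is that $K[M]/K[Q]$ is infinite even when $K$ is finite; since the proposition only asserts that $D$ \emph{can be chosen} with infinite residue fields, your choice of an infinite $K$ settles the matter.
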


\begin{proof}
Let $M$ be the reduced Krull monoid of Corollary \ref{coro3} (whence $M$ is torsionless), let
$K$ be a field, and let $D = K[M]$ be the monoid algebra of $M$ over $K$.
Then $D$ is a Krull domain, $\Cl_v (D) \cong \Cl_v (M) \cong  \bigoplus_{i \in \N}G_i$,
and each  class of $\Cl_v (D)$  contains infinitely many height-one prime ideals by Lemma \ref{lemma3}.
Finally, for each $i \in \N$, there is a submonoid $N_i \subseteq M$ such
that $\Cl_v (M_{N_i}) \cong G_i$. Then $S_i = \{T^{\gamma} \mid \gamma \in N_i\} \subseteq K[M]$ is a submonoid with $D_{S_i} = K[M_{N_i}]$,
$\Cl_v (D_{S_i}) \cong \Cl_v (M_{N_i}) \cong G_i$, and each  class of $\Cl_v (D_{S_i})$ contains infinitely many  height-one prime ideals.
Therefore, $D = K[M]$ is a Krull domain with the properties (1), (2), and (3).

Moreover, assume that $K$ is an infinite field
and let $G = \mathsf q (M)$ be the quotient group of $M$.
If $P$ is a height-one prime ideal of $K[M]$, then
there are two possibilities for $P$.
First, suppose that $P = fK[G] \cap K[M]$, where  $f \in  K[G]$ is a prime element (note that  $K[G]$ is factorial).
Then there is an inclusion $K \hookrightarrow K[M]/P$, whence the factor ring is infinite.
Second, suppose that $P = K[Q]$, where $Q$ is a height-one prime ideal of $M$.
Since $K[M \setminus Q]$ is a set of representatives of the factor ring $K[M]/K[Q]$, we obtain that the factor ring is infinite.
\end{proof}

Let $R$ be an integral domain, $\{X_{\alpha}\}$ be an infinite set of indeterminates over $R$, $T$
be the divisor-closed submonoid of $R[\{X_{\alpha}\}]$ generated by
all nonconstant  polynomials, that are prime elements of $R[\{X_{\alpha}\}]$, and $D = R[\{X_{\alpha}\}]_T$.
Then $\Cl_t (D) \cong \Cl_t (R)$ if and only if $R$ is integrally closed, and
$R$ is a Krull domain if and only if $D$ is a Dedekind domain (\cite[Theorem 3.5]{Ch21a}).
We are now ready to state the main result of this paper.

\smallskip
\begin{theorem} \label{con2}
Let $(G_i)_{i \in \N}$ be a  family of finitely generated abelian groups.
Then there is a Dedekind domain $D$ with the following properties.
\begin{enumerate}
\item  $\Pic(D) \cong \bigoplus_{i \in \N}G_i$.

\item For each $i \in \N$,  there is a submonoid $S_i \subseteq D^{\bullet}$ such that $\Pic(D_{S_i}) \cong G_i$.

\item Each  class of  $\Pic (D)$ and of all $\Pic (D_{S_i})$ contains infinitely many height-one prime ideals.
\end{enumerate}
\end{theorem}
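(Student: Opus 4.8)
The plan is to obtain the desired Dedekind domain $D$ by applying Chang's construction (\cite[Theorem 3.5]{Ch21a}), quoted just before the statement, to the Krull domain built in Proposition \ref{con1}. Recall that for an integral domain $R$ with an infinite set of indeterminates $\{X_\alpha\}$, if $T$ is the divisor-closed submonoid of $R[\{X_\alpha\}]$ generated by the nonconstant prime polynomials and $D = R[\{X_\alpha\}]_T$, then $D$ is a Dedekind domain precisely when $R$ is a Krull domain, and in that case $\Cl_t(D) \cong \Cl_t(R)$ provided $R$ is integrally closed (which every Krull domain is). So the first step is to take $R$ to be the Krull domain produced by Proposition \ref{con1} and set $D = R[\{X_\alpha\}]_T$; then $D$ is a Dedekind domain with $\Pic(D) = \Cl_t(D) \cong \Cl_t(R) = \Cl_v(R) \cong \bigoplus_{i\in\N}G_i$, giving property (1).

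For property (2), I would exploit functoriality of the construction under localization. For each $i$, Proposition \ref{con1} supplies a submonoid of $R^\bullet$ whose localization $R_{S_i}$ satisfies $\Cl_v(R_{S_i}) \cong G_i$. The key observation is that localizing the base ring $R$ commutes appropriately with forming $D = R[\{X_\alpha\}]_T$: localizing $D$ at the image of $S_i$ yields a ring of the same shape built over $R_{S_i}$, namely $R_{S_i}[\{X_\alpha\}]_{T'}$, where $T'$ is the analogous prime-polynomial submonoid. Applying Chang's theorem again (now with base ring $R_{S_i}$, still a Krull domain) gives a Dedekind domain with Picard group $\Cl_t(R_{S_i}) \cong G_i$. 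Thus there is a submonoid of $D^\bullet$ — the image of $S_i$ under $R^\bullet \hookrightarrow D^\bullet$ — whose localization has Picard group $G_i$.

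For property (3), the distribution of prime ideals across the classes, I would trace the correspondence between height-one primes of $D$ and those of $R$ (together with the new primes coming from the indeterminates). Proposition \ref{con1} already guarantees that each class of $\Cl_v(R)$ and of each $\Cl_v(R_{S_i})$ contains infinitely many height-one primes, and the extra freedom from the infinitely many indeterminates $\{X_\alpha\}$ can only add primes in each class; so infinitude of primes per class should descend to $D$ and its localizations once the class-group isomorphism is shown to respect which class a given prime lands in.

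The hard part, I expect, will not be the class-group computation — that is essentially packaged into the cited \cite[Theorem 3.5]{Ch21a} — but rather verifying carefully that localization at $S_i$ interacts correctly with the divisor-closed submonoid $T$ of prime polynomials, i.e.\ that $D_{S_i}$ really is of the form $R_{S_i}[\{X_\alpha\}]_{T'}$ so that Chang's theorem can be reapplied to the localized base ring. One must check that inverting elements of $R^\bullet$ does not disturb the structure of $T$ (a prime polynomial over $R$ should remain prime, or at least a unit, over $R_{S_i}$, and no new nonconstant primes are introduced in a way that breaks the isomorphism). Confirming this compatibility, and then matching up classes of primes so that (3) transfers cleanly, is where the genuine verification lies.
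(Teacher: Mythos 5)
Your overall route is the paper's: take the Krull domain $R$ of Proposition \ref{con1}, form $D = R[\{X_\alpha\}]_T$, and get property (1) from \cite[Theorem 3.5]{Ch21a}; this part is fine. The genuine gap sits exactly at the step you deferred in (2). It is \emph{not} true that no new nonconstant prime polynomials appear after inverting $N_i$ (the submonoid of $R^\bullet$ from Proposition \ref{con1}): in general the saturation $T'$ of $T$ inside $R_{N_i}[\{X_\alpha\}]$ is \emph{strictly smaller} than the divisor-closed submonoid $T_0$ generated by all nonconstant prime polynomials of $R_{N_i}[\{X_\alpha\}]$, so $D_{N_i} = \big(R_{N_i}[\{X_\alpha\}]\big)_{T'}$ is not of the shape required by Chang's theorem and you cannot simply re-apply it. Concretely: by Nagata's theorem the kernel of the surjection $\Cl_v(R) \to \Cl_v(R_{N_i})$ is generated by classes of height-one primes meeting the saturation of $N_i$, and this kernel is nontrivial whenever $\bigoplus_{j \in \N} G_j \not\cong G_i$; so pick a height-one prime $\mathfrak p$ of $R$ with $[\mathfrak p] \neq 0$ meeting that saturation, write $\mathfrak p = (a,b)_v$, and set $g = aX_1 + b$. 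Then $g$ is irreducible over $K = \mathsf q(R)$ and its content over $R_{N_i}$ is $v$-trivial (since $\mathfrak p R_{N_i} = R_{N_i}$), so $g$ is a prime element of $R_{N_i}[\{X_\alpha\}]$ by Gauss' lemma for Krull domains. But $g \notin T'$: if $g$ divided $u f_1 \cdots f_k$ with $f_j \in T$ and $u \in R_{N_i}^\times$, then (each $f_j$ stays prime over $R_{N_i}$) $g$ would be associated to some $f_j$, say $f_j = wg$ with $w \in K^\times$; comparing contents over $R$ and using $c(f_j)_v = R$ gives $R = (w\, c(g))_v = w\,\mathfrak p$, forcing $\mathfrak p$ to be principal, a contradiction. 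So new primes genuinely appear, and your hoped-for compatibility fails.

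This is precisely what the paper's proof repairs by enlarging the multiplicative set. Let $T_0$ be the divisor-closed submonoid of $R_{N_i}[\{X_\alpha\}]$ generated by \emph{all} its nonconstant prime polynomials; the true half of your claim (a prime polynomial over $R$ remains prime over $R_{N_i}$, because a nonconstant polynomial cannot divide a constant) gives $T \subseteq T_0$, so $S_i := N_i T_0$ is a submonoid of $D^{\bullet}$ and $D_{S_i} = \big(R_{N_i}[\{X_\alpha\}]\big)_{T_0}$ \emph{is} literally Chang's construction over the Krull domain $R_{N_i}$; now \cite[Theorem 3.5]{Ch21a} applies and yields $\Pic(D_{S_i}) \cong \Cl_v(R_{N_i}) \cong G_i$. (Alternatively, one could keep $S_i = N_i$: the localization $D_{N_i}$ is automatically Dedekind, and Nagata's theorem (cf. \cite[Corollary 43.6]{Gi72a}) still gives $\Pic(D_{N_i}) \cong \Cl_v\big(R_{N_i}[\{X_\alpha\}]\big) \cong G_i$, since every height-one prime meeting $T'$ contains, hence equals, some principal prime $fR_{N_i}[\{X_\alpha\}]$ with $f \in T$ --- but that is a different argument from the one you propose.) Finally, for (3) your ``should descend'' needs the concrete mechanism the paper supplies: every class of $\Cl_v(R[\{X_\alpha\}])$ has the form $[IR[\{X_\alpha\}]]$ for an ideal $I$ of $R$ (\cite[Corollary 2.13]{eik02}), so the infinitely many height-one primes $P$ of $R$ (resp. of $R_{N_i}$) in each class extend to infinitely many height-one primes $PR[\{X_\alpha\}]_T$ of $D$ (resp. of $D_{S_i}$) in each class; this is routine once (2) is repaired, but it is where the class-of-a-prime bookkeeping actually happens.
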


\begin{proof}
Let $R$ be the Krull domain of Proposition \ref{con1}, let $\{X_{\alpha}\}$
be an infinite set of indeterminates over $R$, $T$ be the divisor-closed submonoid of $R[\{X_{\alpha}\}]$ generated by all nonconstant prime polynomials in $R[\{X_{\alpha}\}]$, and
$D = R[\{X_{\alpha}\}]_T$. Then, by  \cite[Theorem 3.5]{Ch21a}, $D$ is a Dedekind domain and
$\Pic (D) = \Cl_v (R) \cong \bigoplus_{i \in \N}G_i$.
Moreover, if $i \in \N$, then $\Cl_v (R_{N_i}) \cong G_i$ for some submonoid $N_i \subseteq R^{\bullet}$. Note that if $g \in T$ is a prime polynomial in $R[\{X_{\alpha}\}]$,
then $g$ is a prime polynomial in $R_{N_i}[\{X_{\alpha}\}]$. Hence, if we let $T_0$
be the divisor-closed submonoid of $R_{N_i}[\{X_{\alpha}\}]$ generated by all nonconstant prime polynomials in $R_{N_i}[\{X_{\alpha}\}]$,
then $T \subseteq T_0$ and
$S_i:= N_iT_0$ is a submonoid of $D = R[\{X_{\alpha}\}]_T$, whence $D_{S_i} = (R_{N_i}[\{X_{\alpha}\}])_{T_0}$.
Therefore, we obtain that $\Pic (D_{S_i}) \cong \Cl_v((R_{N_i}[\{X_{\alpha}\}])_{T_0}) \cong \Cl_v(R_{N_i}) \cong G_i$.

For (3), recall that each  class of $\Cl_v (R[\{X_{\alpha}\}])$
is of the form $\big[ IR[\{X_{\alpha}\}] \big]$ for some ideal $I$ of $R$ \cite[Corollary 2.13]{eik02}
and each $[I] \in \Cl_v (R)$ contains infinitely many height-one prime ideals $P$ of $R$ by Proposition \ref{con1}. Hence,
$\big[ IR[\{X_{\alpha}\}] \Big] = \big[PR[\{X_{\alpha}\}] \big]$ and $PR[\{X_{\alpha}\}]$
is a height-one prime ideal of $R[\{X_{\alpha}\}]$. Next, note that
each  class of $\Cl_v (R[\{X_{\alpha}\}]_T)$ is of the form
$\big[ IR[\{X_{\alpha}\}]_T \big]$, which is equal to $\big[ PR[\{X_{\alpha}\}]_T \big]$
and $PR[\{X_{\alpha}\}]_T$ is a height-one prime ideal. Thus, each  class of $\Cl_v (R[\{X_{\alpha}\}]_T)$
contains infinitely many height-one prime ideals. Finally, note that each class of $\Cl_v (R_{N_i})$ contains infinitely many height-one prime ideals
by Proposition \ref{con1} and $D_{S_i} = R_{N_i}[\{X_{\alpha}\}]_{T_0}$. Therefore,
each  class of $\Pic (D_{S_i})$ contains infinitely many height-one prime ideals.
Thus, $D = R[\{X_{\alpha}\}]_T$ is the desired Dedekind domain.
\end{proof}

\smallskip
Using a very different construction (based on rings of integer-valued polynomials)
Peruginelli \cite{Pe23a} obtained independently a result in the flavor of Theorem \ref{con2}, without Property (3).

\smallskip

\section{Orders and sets of lengths of Dedekind domains}

Let $D$ be a Dedekind domain with quotient field $K$ and let  $\mathcal O \subseteq D$ be a subring.
Then $\mathcal O$ is called an {\it order} in $D$ if $\mathsf q (\mathcal O) = K$ and $D$ is a finitely generated $\mathcal O$-module.
In this section,
we study orders and sets of lengths of the Dedekind domain
occurring in Theorem \ref{con2}. These results heavily depend on Property (3) of Theorem \ref{con2}.

Suppose that $\mathcal O$ is an order in a Dedekind domain $D$. Then $\mathcal O$ is one-dimensional, noetherian, and $\overline{ \mathcal O} = D$, whence $\mathcal O$ is a weakly Krull domain and $\Pic (\mathcal O) = \mathcal \Cl_v (\mathcal O)$.
We study the distribution of height-one prime ideals in the classes of the Picard group. Recall that a class $g \in \Pic ( \mathcal O)$ is considered as a subset of $\mathsf q ( \mathcal I^* ( \mathcal O) )$ and $\mathfrak X (\mathcal O) \cap g$ is the set of height-one prime ideals lying in class $g \in \Pic (\mathcal O)$.

The conductor
\[
\mathfrak f = \{ a \in D \mid a D \subseteq \mathcal O \}
\]
is a non-zero ideal of $D$ (we refer to \cite{Re16a} for a characterization of ideals of $D$ occurring as conductor ideals of some order of $D$), and the monoid $\mathcal O^* = \{a \in \mathcal O^{\bullet} \mid a\mathcal O + \mathfrak f = \mathcal O\}$ is
a Krull monoid with class group $\Cl_v (\mathcal O^*) \cong \Pic (\mathcal O)$ (\cite[Theorem 2.11.12]{Ge-HK06a}).
Next we summarize ideal theoretic properties of $D$, $\mathcal O$, and their relationship. Details and proofs can be found in \cite[Theorem 2.11.12]{Ge-HK06a} and we use the same notation as there. We set
\[
\begin{aligned}
\mathcal I_{\mathfrak f} (D) = \{\overline{\mathfrak a} \in \mathcal I (D) \mid \overline{\mathfrak a} + \mathfrak f = D \} , \quad &
\mathcal I_{\mathfrak f} (\mathcal O) = \{ \mathfrak a \in \mathcal I (\mathcal O) \mid \mathfrak a + \mathfrak f = \mathcal O \} , \\
\mathfrak X_{\mathfrak f} (D) = \mathcal I_{\mathfrak f} (D) \cap \mathfrak X (D) , \quad \text{and} \quad  &
\mathfrak X_{\mathfrak f} (\mathcal O) = \mathcal I_{\mathfrak f} (\mathcal O) \cap \mathfrak X (\mathcal O) \,.
\end{aligned}
\]
Then $\mathfrak X_{\mathfrak f} (\mathcal O)$ is the set of invertible prime ideals of $\mathcal O$. The sets $\mathcal I_{\mathfrak f} (D)$ resp. $\mathcal I_{\mathfrak f} ( \mathcal O)$ are free abelian monoids with usual ideal multiplication and with basis $\mathfrak X_{\mathfrak f} (D)$ resp. $\mathfrak X_{\mathfrak f} (\mathcal O)$. The map
\[
\delta^* \colon \mathcal I_{\mathfrak f} (D) \to \mathcal I_{\mathfrak f} (\mathcal O) , \quad \text{defined by } \quad \overline{\mathfrak a} \longmapsto \overline{\mathfrak a} \cap \mathcal O \,,
\]
is a monoid isomorphism mapping $\mathfrak X_{\mathfrak f} (D)$ onto $\mathfrak X_{\mathfrak f} (\mathcal O)$ and
\[
\overline{\mathfrak a} = ( \overline{\mathfrak a} \cap \mathcal O)D \quad \text{for every} \quad \overline{\mathfrak a} \in \mathcal I_{\mathfrak f} (D) \,.
\]
Moreover, $\delta^*$ induces  an epimorphism
\[
\gamma \colon \Pic (\mathcal O ) \ \to \ \Pic (D), \quad \text{defined by} \quad [\overline{\mathfrak a} \cap \mathcal O] \longmapsto [\overline{\mathfrak a}] \in \Pic (D)
\]
for every $\overline{\mathfrak a} \in \mathcal I_{\mathfrak f} (D)$ (see \cite[Theorem 2.10.9]{Ge-HK06a}).

Our goal is a most careful analysis of the map $\gamma$. To do so we introduce the following subsets of $\Pic ( \mathcal O)$ and of $\Pic (D)$:
\begin{itemize}
\item $G_{\mathfrak f} ( \mathcal O) = \{ [\mathfrak p] \in \Pic (\mathcal O) \mid \mathfrak p \in \mathfrak X_{\mathfrak f} ( \mathcal O)\} \subseteq \Pic (\mathcal O)$ denotes the set of classes of $\Pic (\mathcal O)$ containing invertible prime ideals,

\item $G_{\mathfrak f} (D) = \{ [\mathfrak P] \in \Pic (D) \mid \mathfrak P \in \mathfrak  X_{\mathfrak f} (D) \} \subseteq \Pic (D)$    denotes the set of classes of $\Pic (D)$ containing  prime ideals coprime to the conductor,

\item $G_{\mathfrak f, \infty} ( \mathcal O) = \{ [\mathfrak p] \in \Pic (\mathcal O) \mid \mathfrak p \in \mathfrak X_{\mathfrak f} ( \mathcal O), |\mathfrak X_{\mathfrak f} (\mathcal O) \cap [\mathfrak p]|=\infty \} \subseteq \Pic (\mathcal O)$ denotes the set of classes of $\Pic (\mathcal O)$ containing infinitely many invertible prime ideals, and

\item $G_{\mathfrak f, \infty} (D) = \{ [\mathfrak P] \in \Pic (D) \mid \mathfrak P \in \mathfrak  X_{\mathfrak f} (D), |\mathfrak X_{\mathfrak f} (D) \cap [\mathfrak P]|=\infty  \} \subseteq \Pic (D)$    denotes the set of classes of $\Pic (D)$ containing  infinitely many prime ideals coprime to the conductor.
\end{itemize}

We continue with a lemma.

\smallskip
\begin{lemma} \label{technical}
Let all notation be as above.
\begin{enumerate}
\item The map $\gamma_{\mathfrak f} = \gamma \restriction_{ G_{\mathfrak f} ( \mathcal O)} \colon G_{\mathfrak f} ( \mathcal O) \to G_{\mathfrak f} ( D)$ is surjective, whence $|G_{\mathfrak f} ( D)| \le |G_{\mathfrak f} ( \mathcal O)|$.

\item $\gamma_{\mathfrak f} \big( G_{\mathfrak f, \infty} ( \mathcal O) \big) \subseteq G_{\mathfrak f, \infty} (D)$.

\item If $\gamma_{\mathfrak f}^{-1} (g)$ is finite for every $g \in G_{\mathfrak f, \infty} (D)$, then $\gamma_{\mathfrak f, \infty} = \gamma \restriction_{ G_{\mathfrak f, \infty} ( \mathcal O)} \colon G_{\mathfrak f, \infty} ( \mathcal O) \to G_{\mathfrak f, \infty} ( D)$ is surjective, whence $|G_{\mathfrak f, \infty} ( D)| \le |G_{\mathfrak f, \infty} ( \mathcal O)|$.
\end{enumerate}
\end{lemma}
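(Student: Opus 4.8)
The plan is to reduce everything to a single structural fact already recorded above: the isomorphism $\delta^*$ restricts to a bijection between the prime bases $\mathfrak X_{\mathfrak f}(D)$ and $\mathfrak X_{\mathfrak f}(\mathcal O)$, its inverse being $\mathfrak p \mapsto \mathfrak p D$, and this bijection is compatible with $\gamma$ in the sharp sense that $\gamma([\mathfrak p]) = [\mathfrak p D]$ for every $\mathfrak p \in \mathfrak X_{\mathfrak f}(\mathcal O)$ (because $\overline{\mathfrak a} = (\overline{\mathfrak a}\cap\mathcal O)D$ means $\mathfrak p D$ is exactly the divisor-of-$D$ whose contraction is $\mathfrak p$). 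Once this is in hand, each of the three assertions is a short counting argument on prime ideals.

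For (1), I would first check that $\gamma$ really carries $G_{\mathfrak f}(\mathcal O)$ into $G_{\mathfrak f}(D)$: for $\mathfrak p \in \mathfrak X_{\mathfrak f}(\mathcal O)$ the ideal $\mathfrak p D = (\delta^*)^{-1}(\mathfrak p)$ lies in $\mathfrak X_{\mathfrak f}(D)$, so $\gamma([\mathfrak p]) = [\mathfrak p D] \in G_{\mathfrak f}(D)$, and $\gamma_{\mathfrak f}$ is well-defined. Surjectivity is then immediate: given $g = [\mathfrak P] \in G_{\mathfrak f}(D)$ with $\mathfrak P \in \mathfrak X_{\mathfrak f}(D)$, the contraction $\mathfrak p = \delta^*(\mathfrak P) = \mathfrak P \cap \mathcal O$ lies in $\mathfrak X_{\mathfrak f}(\mathcal O)$, whence $[\mathfrak p] \in G_{\mathfrak f}(\mathcal O)$ and $\gamma_{\mathfrak f}([\mathfrak p]) = [(\mathfrak P\cap\mathcal O)D] = [\mathfrak P] = g$. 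For (2), take $[\mathfrak p] \in G_{\mathfrak f,\infty}(\mathcal O)$ and set $g = \gamma([\mathfrak p]) = [\mathfrak p D]$. Every $\mathfrak q \in \mathfrak X_{\mathfrak f}(\mathcal O)\cap[\mathfrak p]$ satisfies $\gamma([\mathfrak q]) = g$, so $\mathfrak q D \in \mathfrak X_{\mathfrak f}(D)$ lies in the class $g$; since $\mathfrak q \mapsto \mathfrak q D$ is injective (it is $(\delta^*)^{-1}$ on primes), the infinite set $\mathfrak X_{\mathfrak f}(\mathcal O)\cap[\mathfrak p]$ is carried to an infinite subset of $\mathfrak X_{\mathfrak f}(D)\cap g$, and therefore $g \in G_{\mathfrak f,\infty}(D)$.

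Part (3) is the heart of the lemma and is where the finiteness hypothesis is used. By (2) the restriction $\gamma_{\mathfrak f,\infty}$ already has target $G_{\mathfrak f,\infty}(D)$, so only surjectivity must be shown. Fix $g \in G_{\mathfrak f,\infty}(D)$, so that $\mathfrak X_{\mathfrak f}(D)\cap g$ is infinite. Sending each $\mathfrak P$ in this infinite set to the class $[\mathfrak P\cap\mathcal O]$ gives a map into the set $\gamma_{\mathfrak f}^{-1}(g)$, which is finite by assumption (each such image class does lie in $\gamma_{\mathfrak f}^{-1}(g)$ because $\gamma([\mathfrak P\cap\mathcal O]) = [\mathfrak P] = g$). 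By the pigeonhole principle some fixed class $h \in \gamma_{\mathfrak f}^{-1}(g)$ is the image of infinitely many $\mathfrak P$; as the contractions $\mathfrak P\cap\mathcal O = \delta^*(\mathfrak P)$ are pairwise distinct and all lie in $\mathfrak X_{\mathfrak f}(\mathcal O)\cap h$, we conclude $|\mathfrak X_{\mathfrak f}(\mathcal O)\cap h| = \infty$, i.e. $h \in G_{\mathfrak f,\infty}(\mathcal O)$, with $\gamma_{\mathfrak f,\infty}(h) = g$. The cardinality inequality follows at once from surjectivity in each case.

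I do not expect a deep obstacle: all three statements rest on the one bijection of prime bases. The points demanding care are bookkeeping—verifying at each stage that the appropriate restriction of $\gamma$ genuinely lands in the intended set of classes, and, in (3), that the pigeonhole output is a class carrying infinitely many \emph{invertible} primes of $\mathcal O$ rather than merely infinitely many primes of $D$. The finiteness hypothesis in (3) is precisely the ingredient that forbids the infinitely many primes over $g$ from being distributed among infinitely many distinct classes of $\mathcal O$, a scenario which would leave every individual fiber class finite and block the pigeonhole step.
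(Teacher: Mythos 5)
Your proposal is correct and follows essentially the same route as the paper's proof: both rest on the bijection $\mathfrak X_{\mathfrak f}(D) \leftrightarrow \mathfrak X_{\mathfrak f}(\mathcal O)$ induced by $\delta^*$ (with inverse $\mathfrak p \mapsto \mathfrak p D$) and its compatibility with $\gamma$, using contraction for surjectivity in (1), injectivity of extension for (2), and in (3) the finiteness of $\gamma_{\mathfrak f}^{-1}(g)$ to force one fiber class to absorb infinitely many of the pairwise distinct contractions $\mathfrak P \cap \mathcal O$. Your pigeonhole phrasing of (3) is just a repackaging of the paper's argument via the union $G_0$ of the classes in $\gamma_{\mathfrak f}^{-1}(g)$, so there is no substantive difference.
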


\begin{proof}
We use that $\delta^* \colon \mathcal I_{\mathfrak f} (D) \to \mathcal I_{\mathfrak f} (\mathcal O)$ is an isomorphism and that $\gamma \colon \Pic (\mathcal O ) \ \to \ \Pic (D)$ is an epimorphism.

(1) To show that $\gamma_{\mathfrak f}$ is surjective, let $g \in G_{\mathfrak f} (D)$ be given, say $g = [\mathfrak P]$ with $\mathfrak P \in \mathfrak X_{\mathfrak f} (D)$. Then $\mathfrak P \cap \mathcal O \in \mathfrak X_{\mathfrak f} ( \mathcal O)$, $[\mathfrak P \cap \mathcal O] \in G_{\mathfrak f} (\mathcal O)$, and $\gamma ( [ \mathfrak P \cap \mathcal O]) = [\mathfrak P]$.

(2)  We assert that, for every $g \in G_{\mathfrak f, \infty} ( \mathcal O)$, the map $\psi_g \colon g \cap \mathfrak X_{\mathfrak f} ( \mathcal O) \to \gamma (g) \cap \mathfrak X_{\mathfrak f} (D)$, defined by $\psi_g (\mathfrak p) = \mathfrak p D$, is injective. If this holds, then $\infty = |g \cap \mathfrak X_{\mathfrak f} ( \mathcal O)| \le |\gamma (g) \cap \mathfrak X_{\mathfrak f} (D)|$, whence $\gamma_{\mathfrak f} (g) \in G_{\mathfrak f, \infty} (D)$.

Let $g \in G_{\mathfrak f, \infty} ( \mathcal O)$. If $\mathfrak p \in g \cap \mathfrak X_{\mathfrak f} ( \mathcal O)$, then $g = [\mathfrak p]$, $\mathfrak p D \in \mathfrak X_{\mathfrak f} (D)$, and $\mathfrak p D \in [\mathfrak p D]  = \gamma ( [\mathfrak p]) = \gamma (g)$, whence $\psi_g$ is well-defined. If $\mathfrak p, \mathfrak p' \in g \cap \mathfrak X_{\mathfrak f} ( \mathcal O)$  with $\psi_g ( \mathfrak p) = \psi_g (\mathfrak p')$, then
\[
\mathfrak p = \mathfrak p D \cap \mathcal O = \psi_g (\mathfrak p) \cap \mathcal O = \psi_g (\mathfrak p') \cap \mathcal O = \mathfrak p' D \cap \mathcal O = \mathfrak p'  \,,
\]
whence $\psi_g$ is injective.

(3) Suppose that $\gamma_{\mathfrak f}^{-1} (g)$ is finite for every $g \in G_{\mathfrak f, \infty} (D)$. By (2), $\gamma_{\mathfrak f, \infty}$ is well-defined and it remains to show surjectivity. Let $g \in G_{\mathfrak f, \infty} (D)$. Let  $G_0 \subseteq G_{\mathfrak f} ( \mathcal O)$ denote the union of classes from $\gamma_{\mathfrak f}^{-1} (g)$ and consider the map
\[
\psi_g \colon g \cap \mathfrak X_{\mathfrak f} (D) \to G_0 \cap \mathfrak X_{\mathfrak f} (\mathcal O) \,, \quad \text{defined by} \quad \mathfrak P \longmapsto \mathfrak P \cap \mathcal O \,.
\]
Let $\mathfrak P \in g \cap \mathfrak X_{\mathfrak f} (D)$ and $\mathfrak p = \mathfrak P \cap \mathcal O$. Then $\mathfrak p \in \mathfrak X_{\mathfrak f} ( \mathcal O)$ and $\mathfrak p D = \mathfrak P$. Thus, we obtain that $[\mathfrak p] \in \gamma_{\mathfrak f}^{-1} (g)$, $\gamma_{\mathfrak f} ( [\mathfrak p]) = [\mathfrak p D] = g$ and $\mathfrak p \in [\mathfrak p] \in \gamma_{\mathfrak f}^{-1} (g)$, whence $\psi_g$ is well-defined. Next we show that $\psi_g$ is injective. If $\mathfrak P, \mathfrak P' \in g \cap \mathfrak X_{\mathfrak f} (D)$ with $\psi_g ( \mathfrak P) = \psi_g ( \mathfrak P')$, then
\[
\mathfrak P = (\mathfrak P \cap \mathcal O)D = \psi_g (\mathfrak P)D = \psi_g (\mathfrak P')D = (\mathfrak P' \cap \mathcal O)D= \mathfrak P' \,,
\]
whence $\psi_g$ is injective.

Since $g \cap \mathfrak X_{\mathfrak f} (D)$ is infinite and $\psi_g$ is injective, it follows that $G_0 \cap \mathfrak X_{\mathfrak f} (\mathcal O)$ is infinite. Since $\gamma_{\mathfrak f}^{-1} (g)$ is finite, there is some $h \in \gamma_{\mathfrak f}^{-1} (g)$ for which $h \cap \mathfrak X_{\mathfrak f} (\mathcal O)$ is infinite. Thus, $h \in G_{\mathfrak f, \infty} ( \mathcal O)$ and $\gamma_{\mathfrak f, \infty} (h) = \gamma_{\mathfrak f} (h)=g$, whence $\gamma_{\mathfrak f, \infty}$ is surjective.
\end{proof}

\smallskip
\begin{corollary} \label{coro-8}
Let $R$  be either equal to the Dedekind domain $D$ of Theorem \ref{con2} or be equal to a localization $D_S$ for a submonoid $S \subseteq D^{\bullet}$, and suppose that its Picard group $\Pic (R)$ is infinite. Let $\mathcal O \subseteq R$ be an order with conductor $\mathfrak f$ such that the factor group of $(R/\mathfrak f)^\times / (\mathcal O/\mathfrak f)^\times$ modulo $R^\times/{\mathcal O}^{\times}$ is finite. Then infinitely many classes of $\Pic (\mathcal O)$ contain infinitely many invertible prime ideals.
\end{corollary}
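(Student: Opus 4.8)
The plan is to reformulate the assertion in the notation introduced above and then apply Lemma \ref{technical}(3). Since $\mathfrak X_{\mathfrak f}(\mathcal O)$ is precisely the set of invertible prime ideals of $\mathcal O$, the claim that infinitely many classes of $\Pic(\mathcal O)$ contain infinitely many invertible prime ideals is just the statement that $G_{\mathfrak f, \infty}(\mathcal O)$ is infinite. To produce infinitely many such classes I would pull them back through the surjection of Lemma \ref{technical}(3); thus the proof reduces to checking the two inputs of that lemma, namely that $G_{\mathfrak f, \infty}(R)$ is infinite and that the fibres $\gamma_{\mathfrak f}^{-1}(g)$ are finite for all $g \in G_{\mathfrak f, \infty}(R)$.

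First I would show that $G_{\mathfrak f, \infty}(R) = \Pic(R)$, so that it is infinite by hypothesis. Since $R$ is a Dedekind domain, its height-one primes are exactly its nonzero primes, and the conductor $\mathfrak f$, being a nonzero ideal of $R$, is divisible by only finitely many of them. Consequently a height-one prime $\mathfrak P$ of $R$ satisfies $\mathfrak P + \mathfrak f = R$ (i.e. lies in $\mathfrak X_{\mathfrak f}(R)$) with only finitely many exceptions. By Theorem \ref{con2}(3) every class of $\Pic(R)$ contains infinitely many height-one primes, and discarding the finitely many that divide $\mathfrak f$ leaves infinitely many primes of $\mathfrak X_{\mathfrak f}(R)$ in each class. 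Hence $\mathfrak X_{\mathfrak f}(R) \cap g$ is infinite for every $g \in \Pic(R)$, that is $G_{\mathfrak f, \infty}(R) = \Pic(R)$.

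Next I would verify the finiteness of the fibres, and this is where the hypothesis on the order enters. By the structure theory of the order $\mathcal O$ in the Dedekind domain $R$ (\cite[Theorem 2.11.12]{Ge-HK06a}), the kernel of the epimorphism $\gamma \colon \Pic(\mathcal O) \to \Pic(R)$ is isomorphic to the factor group of $(R/\mathfrak f)^\times/(\mathcal O/\mathfrak f)^\times$ modulo the image of $R^\times/\mathcal O^\times$, which is finite precisely by assumption. Since $\gamma$ is a group homomorphism, each fibre $\gamma^{-1}(g)$ is a coset of $\ker \gamma$ and therefore finite; a fortiori $\gamma_{\mathfrak f}^{-1}(g) = \gamma^{-1}(g) \cap G_{\mathfrak f}(\mathcal O)$ is finite for every $g \in \Pic(R)$.

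With both inputs in hand, Lemma \ref{technical}(3) applies and shows that $\gamma_{\mathfrak f, \infty} \colon G_{\mathfrak f, \infty}(\mathcal O) \to G_{\mathfrak f, \infty}(R)$ is surjective. As its target $G_{\mathfrak f, \infty}(R) = \Pic(R)$ is infinite, the source $G_{\mathfrak f, \infty}(\mathcal O)$ must be infinite as well, which is the desired conclusion. I expect the main obstacle to be the correct identification of the stated hypothesis with the finiteness of $\ker \gamma$: this rests on the units--Picard exact sequence attached to the conductor square of $\mathcal O \subseteq R$, as encoded in \cite[Theorem 2.11.12]{Ge-HK06a}, and it is exactly this finiteness that unlocks the fibre hypothesis of Lemma \ref{technical}(3). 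Everything else is bookkeeping with the sets $\mathfrak X_{\mathfrak f}$ and with the finitely many prime divisors of the conductor.
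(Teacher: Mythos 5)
Your proposal is correct and follows essentially the same route as the paper's own proof: both reduce the claim to showing $G_{\mathfrak f, \infty}(\mathcal O)$ is infinite, establish that $G_{\mathfrak f, \infty}(R)$ is infinite via Theorem \ref{con2}(3) together with the coprimality-to-$\mathfrak f$ observation, deduce finiteness of the fibres $\gamma_{\mathfrak f}^{-1}(g)$ from the units--Picard exact sequence of \cite[Theorem 2.11.12]{Ge-HK06a} and the hypothesis on the factor group, and then conclude by the surjectivity statement of Lemma \ref{technical}(3). The only cosmetic difference is that you note the slightly stronger fact $G_{\mathfrak f, \infty}(R) = \Pic(R)$, which the paper does not bother to state.
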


\noindent
{\it Remark.} The proof of the Corollary uses only Property (3) of Theorem \ref{con2} but it does not make use of the specific construction. Note that every Dedekind domain with the finite norm property satisfies the additional condition on the above mentioned factor group.

\begin{proof}
We use all notation as introduced above. Thus, we need to show that $G_{\mathfrak f, \infty} ( \mathcal O)$ is infinite. By Theorem \ref{con2}, every class of $\Pic (R)$ contains infinitely many prime ideals and hence every class contains infinitely many prime ideals coprime to the conductor. This means that $G_{\mathfrak f, \infty} ( R)$ is infinite. Therefore, it suffices to verify that $\gamma_{\mathfrak f}^{-1} (g)$ is finite for every $g \in G_{\mathfrak f, \infty} (R)$, because  then the assertion follows from Lemma \ref{technical}(3).

Let $g \in G_{\mathfrak f, \infty} (R)$. Consider the exact sequence
\[
\boldsymbol 1 \ \rightarrow \ R^\times/{\mathcal O}^{\times} \ \rightarrow \ (R/\mathfrak f)^\times / (\mathcal O/\mathfrak f)^\times \ \rightarrow \ \Pic (\mathcal O ) \ \overset{\gamma}{\rightarrow} \ \Pic (R) \ \rightarrow \ \boldsymbol 0\,.
\]
By the finiteness of the factor group in the assumption, it follows that $\ker ( \gamma )$ is finite. If $h \in \Pic ( \mathcal O)$ with $\gamma (h) = g$, then $\gamma_{\mathfrak f}^{-1} (g) \subseteq \gamma^{-1} (g) = h + \ker ( \gamma )$, whence $\gamma_{\mathfrak f}^{-1} (g)$ is finite.
\end{proof}

\smallskip
\begin{remark}~
{\em
(1) Let $D$ be a Dedekind domain, $\mathcal O \subseteq D$ be an order, and let $\mathcal O^* \subseteq \mathcal O$ be as in the previous discussion. Since $\mathcal O^*$ is a regular congruence monoid in $D$, every class of $\Cl_v (\mathcal O^*)$ is a union of ray classes. Thus, if every ray class contains infinitely many prime ideals, then every class of $\Cl (\mathcal O^*)$ contains infinitely many prime ideals, whence every class of $\Pic (\mathcal O)$ contains infinitely many prime ideals (see \cite[Proposition 2.11.14]{Ge-HK06a}).

(2) The assumption made in (1) (on prime ideals in ray classes) holds true if $D$ is a holomorphy ring in a global field. However, it does not hold true in general. Indeed, there are Dedekind domains $D$ and orders $\mathcal O \subseteq D$ such that every class of $\Pic (D)$ contains infinitely many prime ideals but $\Pic (\mathcal O)$ does not have this property (see \cite[Remark 3.9]{Fa-Wi22c}).}
\end{remark}

\medskip
Our second corollary deals with the arithmetic of the Dedekind domains occurring in our main result (Theorem \ref{con2}). In order to do so, we gather the involved arithmetic concepts.

Let $M$ be a  monoid. If $a \in M$ and $a = u_1 \cdot \ldots \cdot u_k$, where $k \in \N$ and $u_1, \ldots, u_k$ are irreducible elements of $M$, then $k$ is called a factorization length of $a$. The set  $\mathsf L (a)$ of all factorization lengths of $a$ is called the {\it set of lengths} of $a$. It is convenient to set $\mathsf L (a) = \{0\}$ if $a$ is invertible. Note that
\begin{itemize}
\item $\mathsf L (a) = \{0\}$ if and only if $0 \in \mathsf L (a)$ if and only if $a$ is invertible.

\item $\mathsf L (a) = \{1\}$ if and only if $1 \in \mathsf L (a)$ if and only if $a$ is irreducible.
\end{itemize}
If $M$ is $v$-noetherian, then every non-unit has a factorization into irreducibles and all sets of lengths are finite.
We let
\[
\mathcal L (M) = \{\mathsf L (a) \mid a \in M\}
\]
denote the {\it system of sets of lengths} of $M$. For a finite nonempty set $L = \{m_0, \ldots, m_k\}$ $\subseteq \Z$, with $k \in \N_0$ and $m_0 < \ldots < m_k$, we denote by $\Delta (L) = \{m_i - m_{i-1} \mid i \in [1,k] \} \subseteq \N$ the set of distances of $L$. Then
\[
\Delta (M) = \bigcup_{L \in \mathcal L (M)} \Delta (L) \ \subseteq \N
\]
denotes the {\it set of distances} of $M$.

For an additive abelian group $G$ and a subset $G_0 \subseteq G$, let $\mathcal F (G_0)$ denote the free abelian monoid with basis $G$. If $S = g_1 \cdot \ldots \cdot g_{\ell} \in \mathcal F (G_0)$, then $\sigma (S) = g_1 + \ldots + g_{\ell}$ is the sum of $S$. The set
\[
\mathcal B (G_0) = \{ S \in \mathcal F (G_0) \mid \sigma (S) = 0 \} \subseteq \mathcal F (G_0)
\]
is a submonoid of $\mathcal F (G_0)$ and, since the inclusion $\mathcal B (G_0) \hookrightarrow \mathcal F (G_0)$ is a divisor homomorphism, $\mathcal B (G_0)$ is a Krull monoid. In additive combinatorics, $\mathcal F (G_0)$ is called the monoid of {\it sequences } over $G_0$ and $\mathcal B (G_0)$ is the {\it monoid of zero-sum sequences} over $G_0$. Furthermore,
\[
\Delta^* (G) = \{ \min \Delta ( \mathcal B (G_0) ) \mid G_0 \subseteq G \ \text{with} \ \Delta ( \mathcal B (G_0) ) \ne \emptyset \}
\]
is the {\it set of minimal distances} of $\mathcal B (G)$. If $G$ is finite with $|G| \ge 3$, then $\Delta^* (G)$ is finite and, by \cite{Ge-Zh16a}, we have
\[
\max \Delta^* (G) = \max \{\mathsf r (G)-1, \exp (G)-2\} \,,
\]
where $\exp (G)$ is the exponent of $G$ and $\mathsf r (G)$ is the  rank of $G$ (i.e., the maximum of the $p$-ranks of $G$).

A subset $L \subseteq \Z$ is called an
{\it almost arithmetic multiprogression} ({\rm AAMP})  with  {\it difference}  $d$ and  {\it bound}  $M$ if
\[
L = y + (L' \cup L^* \cup L'') \, \subseteq \, y + \mathcal D + d \Z \,,
\]
where
\begin{itemize}
\item $d \in \N$ and $\{0,d\} \subseteq \mathcal D \subseteq [0,d]$,

\item $\min L^* = 0$ and $L^* = (\mathcal D + d \Z) \cap [0, \max L^*]$, and

\item $L' \subseteq [-M, -1]$,  $L'' \subseteq \max L^* + [1,M]$,  and $y \in \Z$.
\end{itemize}

\smallskip
Before we formulate our final corollary, we draw attention to an interesting special case.
Suppose that $(G_i)_{i \in \N}$ is the family of finite cyclic groups of order $|G_i|=i$ for all $i \in \N$.
Then, by Theorem \ref{con2}, there is a Dedekind domain $D$ such that (i) $\Pic(D) \cong \bigoplus_{i \in \N}G_i$, which is infinite, and
(ii) for every $i \in \N$, there is a submonoid $S_i$ of $D^{\bullet}$ so that $\Pic (D_{S_i}) \cong G_i$.
Hence, all the cyclic groups $G_i$ and their associated sets of minimal distances $\Delta^* (G_i)$ can be realized as the Picard groups and as sets of minimal distances of overrings of the fixed Dedekind domain $D$, and all domains have infinitely many prime ideals in all classes of their respective class groups.
The sets $\Delta^* (G_i)$ have found much interest in recent literature (see \cite{Pl-Sc20a}).

\smallskip
\begin{corollary} \label{coro-9}
Let $R$ be either equal to the  Dedekind domain $D$ of Theorem \ref{con2}
or equal to a localization $D_S$ for a submonoid $S \subseteq D^{\bullet}$.
\begin{enumerate}
\item Suppose that $\Pic (R)$ is finite. Then there is an $N \in \N_0$ with the following property: for every $L \in \mathcal L (R^{\bullet})$ there is some $d$ in the finite set $\Delta^* \big( \Pic (R) \big)$ such that $L$ is an \AAMP \ with difference $d$ and bound $N$.

\smallskip
\item Suppose that $\Pic (R)$ is infinite. If $L = \{m_1, \ldots, m_k \} \subseteq \N_{\ge 2}$ with $k \in \N$, $2 \le m_1 < \ldots < m_k$, and $n_1, \ldots, n_k \in \N$, then there is an $a \in R^{\bullet}$ which has at least $n_i$ distinct factorizations of length $m_i$ for all $i \in [1,k]$ and no factorizations of other lengths.
     In particular,
      \[
      \mathcal L (R^{\bullet}) = \Big\{ \{0\}, \{1\} \Big\} \cup \Big\{ L \subseteq \N_{\ge 2} \mid L \ \text{is finite nonempty} \Big\} \,.
      \]
\end{enumerate}
\end{corollary}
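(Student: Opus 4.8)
The plan is to transport every length-theoretic question to the monoid of zero-sum sequences over $G := \Pic(R)$, and then to invoke the Structure Theorem for Sets of Lengths for (1) and Kainrath's realization theorem for (2); in both cases the decisive input is Property (3) of Theorem \ref{con2}. Since $R$ is a Dedekind domain, $R^{\bullet}$ is a Krull monoid with $\Cl_v(R^{\bullet}) = \Pic(R) = G$, and by Property (3) every class of $G$ contains a prime divisor, so the set of classes containing primes is all of $G$. Hence there is a transfer homomorphism $\boldsymbol\beta \colon R^{\bullet} \to \mathcal B(G)$ (\cite[Theorem 3.4.10]{Ge-HK06a}), and as transfer homomorphisms preserve sets of lengths we obtain $\mathcal L(R^{\bullet}) = \mathcal L(\mathcal B(G))$. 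Every assertion about lengths in $R^{\bullet}$ can thus be read off from $\mathcal B(G)$.

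For (1) the group $G$ is finite. If $\Delta^*(G) = \emptyset$ (equivalently $|G| \le 2$, so that $R^{\bullet}$ is factorial or half-factorial) every set of lengths is a singleton and there is nothing to prove. Otherwise $\Delta^*(G)$ is finite as recalled above (\cite{Ge-Zh16a}), and the Structure Theorem for Sets of Lengths (\cite[Theorem 4.4.11]{Ge-HK06a}), applied to $\mathcal B(G)$ and transported to $R^{\bullet}$ through $\mathcal L(R^{\bullet}) = \mathcal L(\mathcal B(G))$, produces a bound $N \in \N_0$ such that every $L \in \mathcal L(R^{\bullet})$ is an \AAMP\ with some difference $d \in \Delta^*(G)$ and bound $N$, which is precisely the claim.

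For (2) the group $G$ is infinite, and the mechanism I would exploit is that Property (3) supplies infinitely many primes in each class, which lets me lift zero-sum sequences to $R^{\bullet}$ using pairwise distinct primes. Given a zero-sum sequence $S = g_1 \cdots g_n$ over $G$, choose pairwise distinct height-one primes $\mathfrak p_1, \ldots, \mathfrak p_n$ with $[\mathfrak p_j] = g_j$; since $\sigma(S) = 0$ the ideal $\mathfrak p_1 \cdots \mathfrak p_n$ is principal, say $= aR$. The factorizations of $a$ are then exactly the partitions of $[1,n]$ into minimal zero-sum blocks, and forgetting the labels is a length-preserving surjection onto the factorizations of $S$; therefore $\mathsf L(a) = \mathsf L(S)$, so lifting preserves the set of lengths exactly while the number of factorizations can only grow. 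By Kainrath's theorem (\cite[Theorem 7.4.1]{Ge-HK06a}) there is a zero-sum sequence $S$ over the infinite group $G$ with $\mathsf L(S) = L = \{m_1, \ldots, m_k\}$, and I would arrange, for each $i$, a length-$m_i$ factorization of $S$ in which an atom supported on a repeated group element occurs with some multiplicity $r_i$; permuting the $r_i$ distinct primes that realize that element then yields at least $r_i!$ factorizations of $a$ of length $m_i$, so choosing $r_i$ with $r_i! \ge n_i$ forces at least $n_i$ factorizations of that length. The main obstacle is exactly this simultaneous bookkeeping, namely realizing precisely the set $L$ while injecting enough symmetry at each individual length without letting the auxiliary repeated atoms create new lengths or merge existing ones; I would resolve it by choosing the auxiliary elements mutually independent inside the infinite group $G$, so that the atoms introduced for different lengths do not interact.

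Finally, the displayed formula for $\mathcal L(R^{\bullet})$ follows. The inclusion $\subseteq$ holds in every Krull, hence BF-, monoid: $\mathsf L(a) = \{0\}$ precisely for units, $\mathsf L(a) = \{1\}$ precisely for atoms, and any remaining element is a non-unit non-atom all of whose factorizations have length at least $2$, so $\mathsf L(a)$ is a finite nonempty subset of $\N_{\ge 2}$. The reverse inclusion is the special case $n_1 = \cdots = n_k = 1$ of the statement just established, together with the obvious realizations of $\{0\}$ by units and $\{1\}$ by irreducibles.
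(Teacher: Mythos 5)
Your setup---reduce everything to $\mathcal B(G)$ with $G = \Pic(R)$ via Property (3) of Theorem \ref{con2} and the transfer homomorphism---is exactly the paper's route; your part (1) coincides with the paper's proof (both rest on \cite[Theorem 4.4.11]{Ge-HK06a}, and your aside on the $|G|\le 2$ edge case is harmless), and your lifting construction in part (2) (distinct primes over the classes of a zero-sum sequence $S$ give $a \in R^{\bullet}$ with $\mathsf L(a)=\mathsf L(S)$ and at least as many factorizations in each length) is correct; it is an explicit version of what the paper extracts from the transfer homomorphism.

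The genuine gap is the counting argument in part (2). You invoke \cite[Theorem 7.4.1]{Ge-HK06a} only in its weak, Kainrath form (``there is $S$ with $\mathsf L(S)=L$'') and then try to manufacture the ``at least $n_i$ factorizations of length $m_i$'' yourself. Two things go wrong. First, the weak form gives you no control over the internal structure of the factorizations of $S$, so there is nothing you can ``arrange''. Second, your proposed mechanism is self-defeating: if the auxiliary element $h$ carrying the repeated atom is chosen independent of the rest of the support, then $\mathcal B\big(\mathrm{supp}(S)\cup\{h,-h\}\big)$ splits as a direct product, so the auxiliary atoms occur, with the same multiplicity, in \emph{every} factorization of the modified sequence; they shift all lengths uniformly and boost the factorization count at all lengths simultaneously, and they cannot occur only in the length-$m_i$ factorizations, which is precisely what your per-length bookkeeping requires. (A uniform-boost salvage does exist---realize $L-r$ by the weak theorem and multiply by $(h(-h))^{r}$ with $r!\ge \max_i n_i$---but it needs $r \le m_1-2$, hence fails when $m_1=2$, and needs care with torsion; in any case it is not what you wrote.) The paper avoids all of this because \cite[Theorem 7.4.1]{Ge-HK06a} is stronger than the form you used: it already produces $A\in\mathcal B(G)$ having at least $n_i$ factorizations of length $m_i$ and no factorizations of other lengths, after which your own lifting step (equivalently, the paper's transfer argument) finishes the proof. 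So the repair is simply to quote that theorem at full strength.
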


\begin{proof}
Note that the multiplicative monoid $R^{\bullet}$ of non-zero elements is a Krull monoid. Let $M$ be a Krull monoid with class group $G$ and let $G_0 \subseteq G$ denote the set of classes containing prime divisors. Then there is a transfer homomorphism $\boldsymbol \beta \colon M \to \mathcal B (G_0)$, which implies that $\mathsf L_M (a) = \mathsf L_{\mathcal B (G_0)} ( \boldsymbol \beta (a))$ and hence $\mathcal L (M) = \mathcal L \big( \mathcal B (G_0) \big)$ (\cite[Theorem 3.4.10]{Ge-HK06a}). By Theorem \ref{con2}, every class of $\Pic (R)$ contains infinitely many prime ideals, whence we have a transfer homomorphism $\theta \colon R^{\bullet} \to \mathcal B \big( \Pic (R) \big)$.

(1) Since $\mathcal L (R^{\bullet}) = \mathcal L \big( \mathcal B (\Pic (R)) \big)$ and $\mathcal B (\Pic (R))$ is finitely generated (here we use that $\Pic (R)$ is finite), the assertion follows from \cite[Theorem 4.4.11]{Ge-HK06a}.

(2) By \cite[Theorem 7.4.1]{Ge-HK06a}, there is an element $A \in \mathcal B \big( \Pic (R) \big)$ with the required properties. Then, there is an element $a \in R^{\bullet}$ with $\boldsymbol \beta (a) = A$. Since $\boldsymbol \beta$ is a transfer homomorphism, we have $\mathsf L_R (a) = \mathsf L_{\mathcal B \big( \Pic (R) \big)} (A)$ and, for every $i \in [1,k]$, the number of factorizations of $a$ of length $m_i$ is greater than or equal to the number of factorizations of $A$ of length $m_i$.
\end{proof}

Corollary \ref{coro-9}.(1) is a key result on the arithmetic of Krull monoids with finite class group having prime divisors in all classes. These Krull monoids are studied in detail with methods from additive combinatorics (for surveys see \cite{Ge-Ru09, Sc16a}).
The arithmetic of Krull monoids with finitely generated class group (without any assumption on the distribution of prime divisors in the classes) is studied  in the recent monograph \cite{Gr22a}. The statement in Corollary \ref{coro-9}.(2) heavily depends on the fact that every class of the Picard group contains at least one prime divisor. We highlight this in the following remark.

\smallskip
\begin{remark}
{\em For every abelian group $G$, that is  a direct sum of cyclic groups  there is a Dedekind domain $D$ with $\Pic (D) \cong G$ and
\[
\mathcal L (D^{\bullet}) = \Big\{ \{k\} \mid k \in \N_0 \Big\} \,.
\]
In more technical terms, every abelian group, that is a direct sum of cyclic groups, has a half-factorial generating set (see \cite[Proposition 3.7.9]{Ge-HK06a}). The standing conjecture says that this is true for all abelian groups (\cite{Gi06a}) and the conjecture has been confirmed for all Warfield groups (\cite{Lo98,Ge-Go03}).}
\end{remark}

\bigskip
\noindent
{\bf Acknowledgement.} We thank Andreas Reinhart for many helpful discussions. This work was supported by the Basic Science Research Program through the National Research Foundation of Korea (NRF)
funded by the Ministry of Education (2017R1D1A1B06029867), and by the Austrian Science Fund FWF, Project Number P33499.

\vspace{.2cm}
\noindent

\providecommand{\bysame}{\leavevmode\hbox to3em{\hrulefill}\thinspace}
\providecommand{\MR}{\relax\ifhmode\unskip\space\fi MR }
\providecommand{\MRhref}[2]{%
  \href{http://www.ams.org/mathscinet-getitem?mr=#1}{#2}
}
\providecommand{\href}[2]{#2}

\end{document}